\theoremstyle{definition}
\newtheorem{thm}{Theorem}
\newtheorem{defn}{Definition}%
\newtheorem{exa}{Example}
\newtheorem{rem}[thm]{\bf{Remark}}
\newcommand{\cX}{\underline{\bf X}}
\newcommand{\cE}{\underline{\bf E}}
\newcommand{\cU}{\underline{\bf U}}
\newcommand{\cV}{\underline{\bf V}}
\newcommand{\cY}{\underline{\bf Y}}
\newcommand{\A}{{\bf A}}
\newcommand{\X}{{\bf X}}
\newcommand{\Y}{{\bf Y}}
\newcommand{\I}{{\bf I}}
\newcommand{\B}{{\bf B}}
\newcommand{\C}{{\bf C}}
\newcommand{\E}{{\bf E}}
\newcommand{\R}{{\bf R}}
\newcommand{\U}{{\bf U}}
\newcommand{\V}{{\bf V}}
\journal{Journal}
\begin{document}

\begin{frontmatter}



\title{Adaptive Cross Tubal Tensor Approximation}

\author[label1]{Salman Ahmadi-Asl}
\author[label1]{Anh Huy Phan}
\author[label1,label2]{Andrzej Cichocki}
\author[label1]{Anastasia Sozykina}
\author[label3]{Zaher Al Aghbari}
\author[label1]{Jun Wang} 
\author[label1]{and\,\,\,Ivan Oseledets}
\affiliation[label1]{organization={Center for Artificial Intelligence Technology, Skolkovo Institute of Science and Technology, Moscow, Russia,\,\, s.asl@skoltech.ru}
          }
\affiliation[label2]{organization={Systems Research Institute of Polish Academy of Science, Warsaw, Poland}
          }
          \affiliation[label3]{organization={Department of Computer Science, University of Sharjah, Sharjah, 27272, UAE}
          }
%





\begin{abstract}In this paper, we propose a new adaptive cross algorithm for computing a low tubal rank approximation of third-order tensors, with less memory and lower computational complexity  than the  truncated tensor SVD (t-SVD). This makes it applicable for decomposing large-scale tensors. We conduct numerical experiments on synthetic and real-world datasets to confirm the efficiency and feasibility of the proposed algorithm. The simulation results show more than one order of magnitude acceleration in the computation of low tubal rank (t-SVD) for large-scale tensors. An application to pedestrian attribute recognition is also presented.
\end{abstract}



\begin{keyword}
Cross tensor approximation, tensor SVD, tubal product
\MSC 15A69 \sep 46N40 \sep 15A23
\end{keyword}

\end{frontmatter}


\section{Introduction}
{Tensors} are high-dimensional generalizations of matrices and vectors. Contrary to the rank of matrices, the rank of tensors is not well understood and has to be defined and determined. Different types of tensor decompositions can have different rank definitions such as Tensor Train (TT) \cite{oseledets2011tensor}, Tucker decomposition \cite{tucker1964extension} and its special case, i.e. Higher Order SVD (HOSVD) \cite{de2000multilinear},  CANDECOMP/PARAFAC decomposition (CPD) \cite{hitchcock1928multiple,hitchcock1927expression}, Block Term decomposition \cite{de2008decompositionsI}, Tensor Train/Tensor Ring (TT-TR) decomposition \cite{oseledets2011tensor,zhao2016tensor,espig2012note}, tubal SVD (t-SVD) \cite{kilmer2011factorization}. {The t-SVD factorizes a tensor into three tensors, two orthogonal tensors and one f-diagonal tensor (to be discussed in Section \ref{Sec:tSVD}). Like the SVD for matrices, the truncation version of the t-SVD provides the best tubal rank approximation for every unitary invariant tensor norm. The t-SVD has been successfully applied in deep learning \cite{newman2018stable,newman2019step}, tensor completion \cite{zhang2016exact,zhang2014novel}, image reconstruction \cite{soltani2016tensor} and tensor compression \cite{kilmer2019tensor}.}

Decomposing big data tensors into the t-SVD format is a challenging task, especially when the data is extremely massive and we can not view the entire data tensor. {The} cross, skeleton, or CUR approximation is a useful paradigm widely used for fast low-rank matrix approximation. {Achieving a higher compression ratio, and problems with data interpretation are other motivations to use the cross approximation methods. The main feature of the cross algorithms that makes them effective for managing very large-scale data tensors is their ability to use less memory and have lower computational complexity. When it comes to the higher compression capacity, for instance, the cross matrix approximation provides sparse factor matrices, whereas the SVD of sparse matrices fails to do so, resulting in a more compact data structure. It is also known that the cross approximations can provide more interpretable approximations, we refer to \cite{mahoney2011randomized} for more details.} 

Due to the mentioned motivations, the cross matrix approximation \cite{goreinov1997theory,goreinov2010find} has been generalized to different types of tensor decompositions such as the TT-Cross \cite{oseledets2010tt}, Cross-3D \cite{oseledets2008tucker}, FSTD \cite{caiafa2010generalizing}, and tubal Cross \cite{tarzanagh2018fast}. The cross matrix approximation is generalized to
the tensor case based on the tubal product (t-product) in \cite{tarzanagh2018fast} where some individual lateral
and horizontal slices are selected and based on them a low tubal rank approximation is
computed. The main drawback of this approach is its dependency on the tubal rank estimation, which may be a difficult task in real-world applications. To tackle this problem, we propose to
generalize the adaptive cross matrix approximation \cite{bebendorf2000approximation,bebendorf2006accelerating,zhao2005adaptive} to tensors based on
the t-product. The idea is to select one actual lateral slice and one actual horizontal slice
at each iteration and adaptively check the tubal rank of the tensor.

The generalization of the adaptive cross matrix approximation to tensors based on the t-product is an interesting problem and in this paper, we discuss how to perform it properly. The novelties done in this work include:

\begin{itemize}
    \item A new adaptive tubal tensor approximation algorithm, which estimates the tubal rank and  compute the low tubal rank approximation. The proposed algorithm does not need to use the whole data tensor and at each iteration works only on a part of the horizontal and lateral slices. This facilitates handling large-scale tensors.
    
    \item Presenting an application in pedestrian attribute recognition.
\end{itemize}

The rest of the paper is structured as follows. The basic definitions are given in Section \ref{Sec:prelim}. The t-SVD model is introduced in Section \ref{Sec:tSVD}. The cross matrix approximation and its adaptive version are discussed in Section \ref{Sec:MACA}. Section \ref{Sec:PTACA}, shows how to generalize the adaptive cross approximation to the tensor case based on the t-product. We compare the computational complexity of the algorithms in Section \ref{sec:compcomp}. The experimental results are presented in Section \ref{Sec:Exper} and Section \ref{Sec:Con} concludes the paper and presents potential future directions.   

\section{Preliminaries}\label{Sec:prelim}
The key notations and concepts used in the rest of the paper are introduced in this section. A tensor, a matrix and a vector are denoted by {an underlined} bold capital case letter, a bold capital case letter and a bold lower case letter, respectively. Slices are subtensors generated with fixed all but two modes. Our work is for real-valued third-order tensors but generalization to complex higher order tensor is also straightforward. For a third-order tensor, $\underline{\X},$ the three types of slices $\underline{\X}(:,:,k),\,\underline{\X}(:,j,:),\,\underline{\X}(i,:,:)$ are called frontal, lateral and horizontal slices. For a third-order tensor $\underline{\X},$ the three types fibers $\underline{\X}(:j,k),\,\underline{\X}(i,:,k),\,\underline{\X}(i,j,:)$ are called columns, rows and tubes. The notation ``${\rm conj}$'' means the complex conjugate of all elements (complex numbers) of a matrix. {The notations $\X_{(:,-j)}$ and $\X_{(-i,:)}$ are used to denote new sub-matrices of the matrix $\X$ with the $j$-column and the $i$-th row removed.} The Frobenius norm of tensors/matrices is denoted by $\|.\|_F$ and the Euclidean norm of a vector is shown by $\|.\|_2$. The notation $|.|$ stands for the absolute value of a real number. We need the subsequent definitions to introduce the tensor SVD (t-SVD) model.

\begin{defn} ({t-product})
Let $\underline{\mathbf X}\in\mathbb{R}^{I_1\times I_2\times I_3}$ and $\underline{\mathbf Y}\in\mathbb{R}^{I_2\times I_4\times I_3}$, the t-product $\underline{\mathbf X}*\underline{\mathbf Y}\in\mathbb{R}^{I_1\times I_4\times I_3}$ is defined as follows
\begin{equation}\label{TPROD}
\underline{\mathbf C} = \underline{\mathbf X} * \underline{\mathbf Y} = {\rm fold}\left( {{\rm circ}\left( \underline{\mathbf X} \right){\rm unfold}\left( \underline{\mathbf Y} \right)} \right),
\end{equation}
where 
\[
{\rm circ} \left(\underline{\mathbf X}\right)
=
\begin{bmatrix}
{\mathbf X}^{(1)} &{\mathbf X}^{(I_3)} & \cdots & {\mathbf X}^{(2)}\\
{\mathbf X}^{(2)} &{\mathbf X}^{(1)} & \cdots & {\mathbf X}^{(3)}\\
 \vdots & \vdots & \ddots &  \vdots \\
 {\mathbf X}^{(I_3)} & {\mathbf X}^{(I_3-1)} & \cdots & {\mathbf X}^{(1)}
\end{bmatrix},
\]
and 
\[
{\rm unfold}(\underline{\mathbf Y})=
\begin{bmatrix}
{\mathbf Y}^{(1)}\\
{\mathbf Y}^{(2)}\\
\vdots\\
{\mathbf Y}^{(I_3)}
\end{bmatrix},\hspace*{.5cm}
\underline{\mathbf Y}={\rm fold} \left({\rm unfold}\left(\underline{\mathbf Y}\right)\right).
\]
Here, ${\bf X}^{(i)}=\cX(:,:,i)$ and ${\bf Y}^{(i)}=\cY(:,:,i)$ for $i=1,2,\ldots,I_3$.
\end{defn}{
We denote by $\widehat{\cX}$, the Fourier transform of $\cX$ along its third mode, which can be computed as $\widehat{\cX}={\rm fft}(\cX,[],3)$.  It is known that the block circulant matrix, ${\rm circ}(\cX)\in\mathbb{R}^{I_1 I_3\times I_2 I_3}$, can be block diagonalized, i.e.,
\begin{eqnarray}
({\bf F}_{I_3}\otimes {\bf I}_{I_1})\,{\rm circ}\,(\cX)({\bf F}^{-1}_{I_3}\otimes {\bf I}_{I_2})=\widehat{\X},
\end{eqnarray}
where ${\bf F}_{I_3}\in\mathbb{R}^{I_3\times I_3}$ is the discrete Fourier transform matrix and $({\bf F}_{I_3}\otimes {\bf I}_{I_1})/\sqrt{I_3}$ is a unitary matrix. Here, the block diagonal matrix $\widehat{\X}$ is
\begin{eqnarray}
\widehat{\X}=\begin{bmatrix}
\widehat{\cX}(:,:,1)    &  & & \\
   & \widehat{\cX}(:,:,2)  & & \\
     &   & \ddots & \\
       &  & &  \widehat{\cX}(:,:,I_3)\\
\end{bmatrix},
\end{eqnarray}
and we have the following important properties \cite{rojo2004some,lu2019tensor}
\begin{eqnarray}\label{conj_1}
\widehat{\cX}(:,:,1)&\in&\mathbb{R}^{I_1\times I_2},\\\label{conj_2}
{\rm conj}(\widehat{\cX}(:,:,i))&=&\widehat{\cX}(:,:,I_3-i+2),
\end{eqnarray}
for $i=2,\ldots,\lceil\frac{I_3+1}{2}\rceil+1$.
The t-product can be equivalently performed in the Fourier domain. Indeed, let $\underline{\C}=\cX*\cY$, then from the definition of the t-product and the fact that the block circulant matrix can be block diagonalized, we have 
\begin{eqnarray}
\nonumber
{\rm unfold}(\underline{\C})&=&{\rm circ}(\cX)\,{\rm unfold(\cY)}\\
\nonumber
&=&({\bf F}_{I_3}^{-1}\otimes{\bf I}_{I_1})(({\bf F}_{I_3}\otimes {\bf I}_{I_1})\,{\rm circ}\,(\cX)({\bf F}^{-1}_{I_3}\otimes {\bf I}_{I_2}))\\\label{eq_a}
&&(({\bf F}^{-1}_{I_3}\otimes {\bf I}_{I_2})\,{\rm unfold}(\cY))\\
\nonumber
&=&({\bf F}_{I_3}\otimes{\bf I}_{I_1})\,\widehat{\X}\,{\rm unfold}(\widehat{\cY}),
\end{eqnarray}
where $\widehat{\cY}={\rm fft}(\cY,[],3)$. If we multiply both sides of \eqref{eq_a} from the left-hand side with $({\bf F}_{I_3}\otimes{\bf I}_{I_1})$, we get ${\rm unfold}(\widehat{\underline{\bf C}})=\widehat{\X}\,{\rm unfold}(\widehat{\underline{\bf Y}})$, where $\widehat{\underline{\bf C}}={\rm fft}(\underline{\bf C},[],3)$. This means that $\widehat{\underline{\mathbf C}}\left( {:,:,i} \right) = \widehat{\underline{\mathbf X}}\left( {:,:,i} \right)\,\widehat{\underline{\mathbf Y}}\left( {:,:,i} \right)$. So, it suffices to transform two given tensors into the Fourier domain and multiply their frontal slices. Then, the resulting tensor in the Fourier domain returned back to the original space via the inverse FFT. Note that due to the equations in \eqref{conj_1}-\eqref{conj_2}, half of the computations are reduced. 
This procedure 
is summarized in Algorithm \ref{ALG:TSVDP}. 
}

\begin{defn} ({Transpose})
The transpose of a tensor $\underline{\mathbf X}\in\mathbb{R}^{I_1\times I_2\times I_3}$ is denoted by $\underline{\mathbf X}^{T}\in\mathbb{R}^{I_2\times I_1\times I_3}$ produced by applying the transpose to all frontal slices of the tensor $\underline{\mathbf X}$ and reversing the order of the second untill the last transposed frontal slices.
\end{defn}

\begin{defn} ({Identity tensor})
Identity tensor $\underline{\mathbf I}\in\mathbb{R}^{I_1\times I_1\times I_3}$ is a tensor whose first frontal slice is an identity matrix of size $I_1\times I_1$ and all other frontal slices are zero. It is easy to show $\underline{\I}*\underline{\X}=\underline{\X}$ and $\underline{\X}*\underline{\I} =\underline{\X}$ for all tensors of conforming sizes.
\end{defn}
\begin{defn} ({Orthogonal tensor})
A tensor $\underline{\mathbf X}\in\mathbb{R}^{I_1\times I_1\times I_3}$ is orthogonal (under t-product operator) if ${\underline{\mathbf X}^T} * \underline{\mathbf X} = \underline{\mathbf X} * {\underline{\mathbf X}^ T} = \underline{\mathbf I}$.
\end{defn}

\begin{defn} ({f-diagonal tensor})
If all frontal slices of a tensor are diagonal then the tensor is called an f-diagonal tensor.
\end{defn}

\begin{defn}
(Inverse of a tensor) The inverse of a tensor $\cX\in\mathbb{R}^{I_1\times I_1\times I_3}$ is denoted by $\cX^{-1}\in\mathbb{R}^{I_1\times I_1\times I_3}$ is a unique tensor satisfying 
$
\cX*\cX^{-1}=\cX^{-1}*\cX=\underline{\bf I},
$
where $\underline{\bf I}\in\mathbb{R}^{I_1\times I_1\times I_3}$ is the identity tensor. The inverse of a tensor can also be computed in the Fourier domain and described in Algorithm \ref{ALG:Moore-Penrose}. The MATLAB command ``inv'' in Line 3 computes the inverse of a matrix. The Moore–Penrose (MP) inverse of a tensor $\cX\in\mathbb{R}^{I_1\times I_2\times I_3}$ is denoted by $\cX^{\dag}\in\mathbb{R}^{I_2\times I_1\times I_3}$ and can be computed by Algorithm \ref{ALG:Moore-Penrose} where ``inv'' is replaced with the MATLAB function ``pinv''. Here, ``pinv'' stands for the MP inverse of a matrix.
\end{defn}

\RestyleAlgo{ruled}
\LinesNumbered
\begin{algorithm}
\SetKwInOut{Input}{Input}
\SetKwInOut{Output}{Output}\Input{Two data tensors $\underline{\mathbf X} \in {\mathbb{R}^{{I_1} \times {I_2} \times {I_3}}},\,\,\underline{\mathbf Y} \in {\mathbb{R}^{{I_2} \times {I_4} \times {I_3}}}$} 
\Output{t-product $\underline{\mathbf C} = \underline{\mathbf X} * \underline{\mathbf Y}\in\mathbb{R}^{I_1\times I_4\times I_3}$}
\caption{Fast t-product of two tensors \cite{kilmer2011factorization,lu2019tensor}}\label{ALG:TSVDP}
      {
      $\widehat{\underline{\mathbf X}} = {\rm fft}\left( {\underline{\mathbf X},[],3} \right)$;\\
      $\widehat{\underline{\mathbf Y}} = {\rm fft}\left( {\underline{\mathbf Y},[],3} \right)$;\\
\For{$i=1,2,\ldots,\lceil \frac{I_3+1}{2}\rceil$}
{                        
$\widehat{\underline{\mathbf C}}\left( {:,:,i} \right) = \widehat{\underline{\mathbf X}}\left( {:,:,i} \right)\,\widehat{\underline{\mathbf Y}}\left( {:,:,i} \right)$;\\
}
\For{$i=\lceil\frac{I_3+1}{2}\rceil+1,\ldots,I_3$}{
$\widehat{\underline{\mathbf C}}\left( {:,:,i} \right)={\rm conj}(\widehat{\underline{\mathbf C}}\left( {:,:,I_3-i+2} \right))$;
}
$\underline{\mathbf C} = {\rm ifft}\left( {\widehat{\underline{\mathbf C}},[],3} \right)$;   
       	}       	
\end{algorithm}

\RestyleAlgo{ruled}
\LinesNumbered
\begin{algorithm}
\SetKwInOut{Input}{Input}
\SetKwInOut{Output}{Output}\Input{The data tensor $\underline{\mathbf X} \in {\mathbb{R}^{{I_1} \times {I_1} \times {I_3}}}$} 
\Output{Tensor inverse $\underline{\mathbf X}^{-1}\in\mathbb{R}^{I_1\times I_1\times I_3}$}
\caption{Fast inverse computation of the tensor $\underline{\bf X}$}\label{ALG:Moore-Penrose}
      {
      $\widehat{\underline{\mathbf X}} = {\rm fft}\left( {\underline{\mathbf X},[],3} \right)$;\\
\For{$i=1,2,\ldots,\lceil \frac{I_3+1}{2}\rceil$}
{                        
$\widehat{\underline{\mathbf C}}\left( {:,:,i} \right) = {\rm inv}\,(\widehat{\X}(:,:,i))$;\\
}
\For{$i=\lceil\frac{I_3+1}{2}\rceil+1,\ldots,I_3$}{
$\widehat{\underline{\mathbf C}}\left( {:,:,i} \right)={\rm conj}(\widehat{\underline{\mathbf C}}\left( {:,:,I_3-i+2} \right))$;
}
$\underline{\mathbf X}^{-1} = {\rm ifft}\left( {\widehat{\underline{\mathbf C}},[],3} \right)$;   
       	}       	
\end{algorithm}

\section{Tensor SVD (t-SVD)}\label{Sec:tSVD}
The tensor SVD (t-SVD) represents a tensor as the t-product of three tensors. The first and last tensors are orthogonal, while the middle tensor is an f-diagonal tensor. To be more precise, let $\underline{\bf X}\in\mathbb{R}^{I_1\times I_2\times I_3}$, then the t-SVD of the tensor $\underline{\bf X},$ is $\underline{\bf X} = \underline{\bf U} * \underline{\bf S}* {\underline{\bf V}^T},$ where $\underline{\bf U}\in\mathbb{R}^{I_1\times R\times I_3}$, and $\underline{\bf V}\in\mathbb{R}^{I_2\times R\times I_3}$ are orthogonal tensors and the tensor $\underline{\bf S}\in\mathbb{R}^{R\times R\times I_3}$ is f-diagonal \cite{kilmer2011factorization,kilmer2013third}, see Figure \ref{fig:tSVD} for an illustration on the t-SVD and its truncated version. Note that Algorithm \ref{ALG:TQR} only needs the truncated SVD of the $\lceil \frac{I_3+1}{2}\rceil$ first frontal slices. The generalization of the t-SVD to tensors of order higher than three is done in \cite{martin2013order}.
\begin{figure}
\begin{center}
    \includegraphics[width=.7\linewidth]{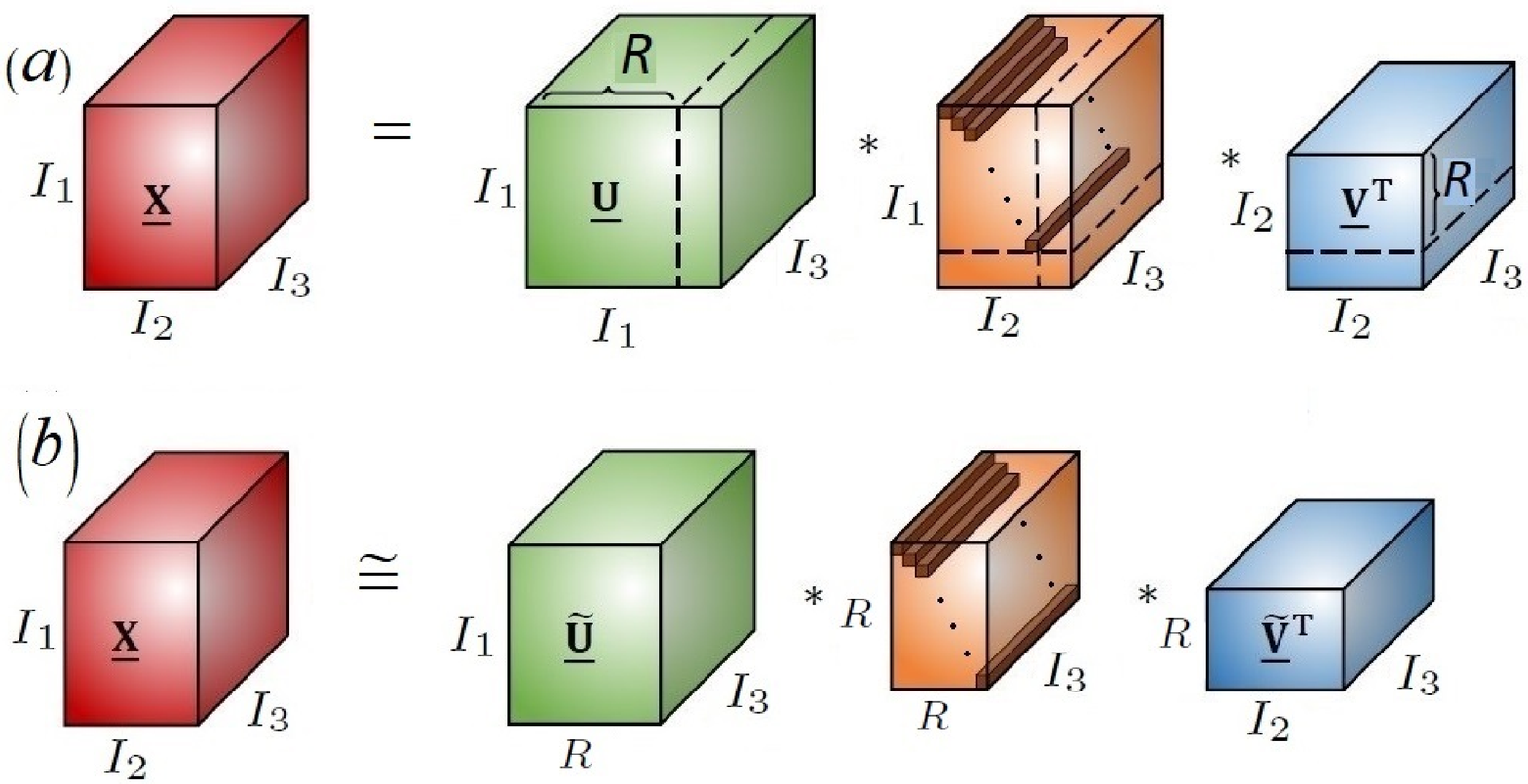}
    \caption{a) Tensor SVD (t-SVD) of the tensor $\cX$, b) The truncated t-SVD of the tensor $\cX$ for the tubal rank $R$ \cite{ahmadi2022efficient}.}\label{fig:tSVD}
    \end{center}
\end{figure}
Other types of classical matrix decompositions such as QR and LU decompositions can be generalized based on the t-product in straightforward ways.

The computational complexity of Algorithm \ref{ALG:TQR} is dominated by the FFT of all tubes of an input tensor and also the truncated SVD of the frontal slices in the Fourier domain. In the literature, some algorithms have been developed to accelerate these computations. For example, using the idea of randomization, we can replace the classical truncated SVD with more efficient and faster approaches such as {the} randomized SVD \cite{halko2011finding,mahoney2011randomized} or cross matrix approximation. Although this idea can somehow solve the mentioned computation difficulty of Algorithm \ref{ALG:TQR}, still we need to access all elements of the underlying data tensor. For very big data tensors where viewing the data tensor even once is very prohibitive, it is required to develop  algorithms that only use a part of the data tensor at each iteration. In this paper, we follow this idea and propose an efficient algorithm for the computation of the t-SVD, which uses only a part of lateral and horizontal slices of a tensor at each iteration. This significantly accelerates the computations and in some of our simulations, we have achieved almost two orders of magnitude acceleration, which shows the performance of the proposed algorithm. To the best of our knowledge, this is the first adaptive cross algorithm developed for the computation of the t-SVD.

\begin{figure}
    \includegraphics[width=0.9\linewidth]{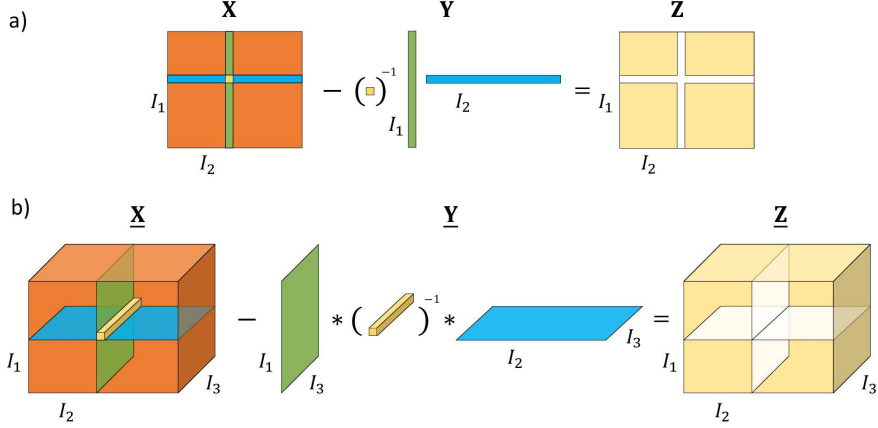}
    \caption{a) One stage of the adaptive cross matrix approximation for rank reduction. The corresponding column and row of the residual matrix ${\bf Z},$ with the same indices as the selected column and row of the original data matrix $\X$ become zeros, i.e. ${\rm rank}({\bf X}-{\bf Y})={\rm rank}({\bf X})-1.$ The rank one matrix approximation ${\bf Y}$ interpolates ${\bf X}$ at the selected column and row. b) One stage of the cross tubal approximation for the tubal rank reduction. The corresponding lateral and horizontal slices of the residual tensor $\underline{\bf Z},$ with the same indices as the selected lateral and horizontal slices of the original data tensor $\underline{\bf X}$ become zeros, ${\rm rank}(\underline{\bf X}-\underline{\bf Y})={\rm rank}(\underline{\bf  X})-1$. The tubal rank one tensor approximation $\cY$ interpolates $\cX$ at the selected lateral and horizontal slices.}\label{fig:CP}
\end{figure}


\RestyleAlgo{ruled}
\LinesNumbered
\begin{algorithm}
\SetKwInOut{Input}{Input}
\SetKwInOut{Output}{Output}\Input{The data tensor $\underline{\mathbf X} \in {\mathbb{R}^{{I_1} \times {I_2} \times {I_3}}}$ and a target tubal rank $R$} 
\Output{The truncated t-SVD of the tensor $\cX$}
\caption{The truncated t-SVD decomposition of the tensor $\underline{\bf X}$}\label{ALG:TQR}
      {
      $\widehat{\underline{\mathbf X}} = {\rm fft}\left( {\underline{\mathbf X},[],3} \right)$;\\
\For{$i=1,2,\ldots,\lceil \frac{I_3+1}{2}\rceil$}
{                        
$[\widehat{\underline{\mathbf U}}\left( {:,:,i} \right),\widehat{\underline{\bf  S}}(:,:,i),\widehat{\V}(:,:,i)] = {\rm svds}\,(\widehat{\X}(:,:,i),R)$;\\
}
\For{$i=\lceil\frac{I_3+1}{2}\rceil+1,\ldots,I_3$}{
$\widehat{\underline{\mathbf U}}\left( {:,:,i} \right)={\rm conj}(\widehat{\underline{\mathbf U}}\left( {:,:,I_3-i+2} \right))$;\\
$\widehat{\underline{\mathbf S}}\left( {:,:,i} \right)=\widehat{\underline{\mathbf S}}\left( {:,:,I_3-i+2} \right)$;\\
$\widehat{\underline{\mathbf V}}\left( {:,:,i} \right)={\rm conj}(\widehat{\underline{\mathbf V}}\left( {:,:,I_3-i+2} \right))$;
}
$\underline{\mathbf U}_R= {\rm ifft}\left( {\widehat{\underline{\mathbf U}},[],3} \right)$;
$\underline{\mathbf S}_R= {\rm ifft}\left( {\widehat{\underline{\mathbf S}},[],3} \right)$; 
$\underline{\mathbf V}_R= {\rm ifft}\left( {\widehat{\underline{\mathbf V}},[],3} \right)$
       	}       	
\end{algorithm}


\section{Matrix cross approximation and its adaptive version}\label{Sec:MACA}
The cross matrix approximation was first proposed in \cite{goreinov1997theory} for fast low-rank approximation of matrices. It provides a low-rank matrix approximation based on some actual columns and rows of the original matrix. It has been shown that a cross approximation with the maximum volume of the intersection matrix leads to close to optimal approximation \cite{goreinov2010find}. The adaptive cross approximation or Cross2D algorithm \cite{savostyanov2006polilinear,tyrtyshnikov2000incomplete,bebendorf2000approximation,bebendorf2006accelerating,zhao2005adaptive} sequentially selects a column and a row of the original data matrix and based on them, computes a rank-1 matrix scaled by the intersection element, as stated in the following theorem, which indeed is the Gaussian elimination process. 

\begin{thm}\label{Thm_1} (Rank-1 deflation)
Let $\X\in\mathbb{R}^{I_1\times I_2}$ be a given matrix, and we select the $i$-th row and the $j$-th column with the nonzero intersection element $\X(i,j)$. Then the following residual matrix 
\[
\Y=\X-\frac{1}{\X(i,j)}\X(:,j)\X(i,:),
\]
vanishes at the $i$-th row and $j$-th column, so ${\rm rank}(\Y)={\rm rank}(\X)-1$.
\end{thm}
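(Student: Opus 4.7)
The claim decomposes naturally into two pieces: (i) the $i$-th row and $j$-th column of $\Y$ vanish, and (ii) $\mathrm{rank}(\Y)=\mathrm{rank}(\X)-1$. Piece (i) is just a direct substitution that uses only the hypothesis $\X(i,j)\neq 0$: for any index $k$ one finds $\Y(i,k) = \X(i,k) - \X(i,j)\X(i,k)/\X(i,j) = 0$, and symmetrically for $\Y(\ell,j)$. I would dispose of this in two lines.

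For piece (ii) I would split into the two one-sided bounds. The lower bound $\mathrm{rank}(\Y)\geq \mathrm{rank}(\X)-1$ is immediate from the rearrangement $\X = \Y + \X(i,j)^{-1}\X(:,j)\X(i,:)$, since an additive rank-one perturbation can raise the rank by at most one. For the upper bound $\mathrm{rank}(\Y)\leq \mathrm{rank}(\X)-1$ my plan is to locate the column space of $\Y$ inside that of $\X$. Each column $\Y(:,k)=\X(:,k)-\X(i,k)\X(i,j)^{-1}\X(:,j)$ is a linear combination of columns of $\X$, hence the column space of $\Y$ sits inside $\mathrm{col}(\X)$; and by piece (i) every column of $\Y$ has a zero at coordinate $i$. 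Therefore $\mathrm{col}(\Y) \subseteq V := \{v \in \mathrm{col}(\X) : v_i = 0\}$, which is the kernel on $\mathrm{col}(\X)$ of the coordinate functional $v\mapsto v_i$. This functional is \emph{nonzero} on $\mathrm{col}(\X)$ precisely because $\X(:,j) \in \mathrm{col}(\X)$ and its $i$-th entry is $\X(i,j)\neq 0$; hence $V$ is a proper hyperplane in $\mathrm{col}(\X)$ with $\dim V = \mathrm{rank}(\X)-1$, which gives the required bound.

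The main obstacle is the upper bound. Bare subadditivity of rank only delivers the useless estimate $\mathrm{rank}(\Y)\leq \mathrm{rank}(\X)+1$; the real content of the theorem is that the pivot hypothesis $\X(i,j)\neq 0$ is exactly what certifies that the hyperplane $V$ is \emph{proper} inside $\mathrm{col}(\X)$, and this is what forces the rank to drop by exactly one rather than by zero. An alternative route would be to permute rows and columns so that $(i,j)$ becomes $(1,1)$ and then recognize $\Y$ as the matrix obtained after a single Gaussian elimination step, whose rank drop is classical; the column-space argument above, however, is more self-contained and avoids permutation bookkeeping, so I would present that one.
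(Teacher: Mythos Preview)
Your argument is correct and is genuinely different from the paper's. The paper first checks (i) by the same direct substitution, but for (ii) it proceeds by a case split: if $\X$ is full rank the claim is declared straightforward, while if $\X$ is rank-deficient the authors permute so that $(i,j)$ is the bottom-right corner, write $\X$ in block form, express the deflated top-left block via the SVD of the principal submatrix, and check that one singular value is killed; they also record an alternative proof that invokes a rank identity of Chu (1995) for updates of the form $\X-\U\R^{-1}\V^T$ with $\U=\X\A$, $\V=\X^T\B$, $\R=\B^T\X\A$. Your route bypasses all of this: the lower bound $\mathrm{rank}(\Y)\ge\mathrm{rank}(\X)-1$ follows at once from rank subadditivity applied to $\X=\Y+\X(i,j)^{-1}\X(:,j)\X(i,:)$, and the upper bound comes from the clean observation that $\mathrm{col}(\Y)$ lies in the hyperplane $\{v\in\mathrm{col}(\X):v_i=0\}$, which is proper because $\X(:,j)\in\mathrm{col}(\X)$ has $i$-th entry $\X(i,j)\neq 0$. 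Your proof is shorter, case-free, and uses nothing beyond column spaces and linear functionals; the paper's SVD computation gives a more explicit picture of how the singular spectrum changes, and the Chu reference situates the result in a broader family of rank-reduction formulas, but neither is needed to establish the stated claim.
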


\begin{proof}

It is obvious that the $j$-th column and $i$-th row of $\Y$ are zeros
\begin{eqnarray}
\Y(:,j)=\X(:,j)-\frac{1}{\X(i,j)}\X(:,j)\X(i,j)=0,\\
\Y(i,:)=\X(i,:)-\frac{1}{\X(i,j)}\X(i,j)\X(i,:)=0.
\end{eqnarray}

To prove Theorem \ref{Thm_1}, we consider two cases: 
\begin{enumerate}
    \item If $\X$ is of full-rank, that is, either $\X(:,j)\notin\,{\rm range}\,(\X_{(:,-j)})$ or $\X(i,:)\notin\,{\rm range}\,(\X_{(-i,:)})$, then it is straightforward that $\Y$ has smaller rank than $\X$.
    
    \item Otherwise, we consider the case $\X$ is rank-deficient and $\X(:,j)$ and $\X(i,:)$ are non zero vectors that $\X(i,:)\in {\rm range}\,(\X_{(-i,:)})$ and $\X(:,j)\in {\rm range}\,(\X_{(:,-j)})$.
    
    Without loss of generality, we assume that $\X(:,j)$ and $\X(i,:)$ are the last column and the last row of the matrix $\X$, i.e. $i=I_1,\,j=I_2$, (see illustration in Figure \ref{fig:thm}) and rank($\X$) $< \min(I_1,I_2)$.
    
    Since $\X(i,:)\in {\rm range}\,(\X_{(-i,:)})$ and $\X(:,j)\in {\rm range}\,(\X_{(:,-j)})$, there exist linear combinations such that 
\begin{eqnarray}
    \X(:,j) = \begin{bmatrix}
        {\bf a}\\{c}
    \end{bmatrix}=\begin{bmatrix}
        {\bf Z}\\{\bf b}^T 
    \end{bmatrix}{\boldsymbol \alpha},\quad\quad
       \X(i,:)^T = \begin{bmatrix}
        {\bf b}\\{ c}
    \end{bmatrix}=\begin{bmatrix}
        {\bf Z}^T\\{\bf a}^T 
    \end{bmatrix}{\boldsymbol \beta},
\end{eqnarray}
where ${\boldsymbol \alpha}\neq 0,\,{\boldsymbol \beta}\neq 0$.  This gives $c = {\boldsymbol \beta}^T {\bf Z} {\boldsymbol \alpha}$.

\begin{figure}
\centering
    \includegraphics[width=0.3\linewidth]{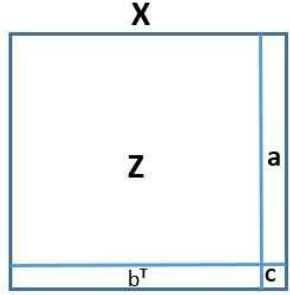}
    \caption{Partitioning the matrix $\bf X$ for the proof of Theorem \ref{Thm_1}.}\label{fig:thm}
\end{figure}
The rank-1 matrix deflation yields 
\begin{eqnarray}
\Y=\X-\frac{1}{c}\begin{bmatrix}
    {\bf a}\\ c
\end{bmatrix}
\begin{bmatrix}
    {\bf b}^T & c
\end{bmatrix}=\begin{bmatrix}
    {\bf Z}-\frac{1}{c}{\bf a}{\bf b}^T & {\bf 0}\\
    {\bf 0} & 0
\end{bmatrix}.
\end{eqnarray}
where the top-left submatrix of $\Y$ has rank-1 reduction from $\bf Z$ 
\begin{eqnarray}\label{EQ_a}
{\bf W}={\bf Z}-\frac{1}{c}{\bf a}{\bf b}^T={\bf Z}-\frac{1}{c}{\bf Z}{\boldsymbol \alpha} \, {\boldsymbol \beta}^T{\bf Z}.
\end{eqnarray}
We next substitute ${\bf Z}$ by its singular value decomposition ${\bf Z}={\bf U}{\bf S}{\bf V}^T$, where $\bf S$ is a diagonal matrix of positive singular values of $\bf Z$ and consider
\begin{eqnarray}\label{EQn}
{\bf W}={\bf U}\left(\underbrace{{\bf S}-\frac{1}{c}
{\bf S}({\bf V}^T{\bf {\boldsymbol \alpha}})({\boldsymbol \beta}^T\bf U) {\bf S}}_{\bf K}\right){\bf V}^T.    
\end{eqnarray}{
Assume ${\bf d}={\bf V}^T{\boldsymbol\alpha}$ and ${\bf e}={\bf U}^T{\boldsymbol \beta }$, then from \eqref{EQn}, we have 
\begin{eqnarray}
 {\bf K}={\bf S}-\frac{{\bf S}{\bf d}{\bf e}^T{\bf S}}{{\bf e}^T{\bf S}{\bf d}}.   
\end{eqnarray}
It is straightforward to see that 
\begin{eqnarray}
{\bf S}^{-1/2}{\bf K}{\bf S}^{-1/2}={\bf I}-\frac{{\bf u}{\bf v}^T}{{\bf v}^T{\bf u}},   
\end{eqnarray}
where ${\bf u}={\bf S}^{1/2}{\bf d}$ and ${\bf v}={\bf S}^{1/2}{\bf e}$. Now, it is readily seen that ${\bf K}$ has a zero singular value, i.e., its rank is reduced from the rank of $\bf S$ by 1. Since the multiplication with orthogonal matrices does not change the matrix rank, the proof of the theorem is completed.}

\begin{rem}{
An alternative proof for Theorem \ref{Thm_1} adopts the following fact proved in \cite{chu1995rank}.}
{
Given $\X\in\mathbb{R}^{m\times n}$ and assume $\U\in\mathbb{R}^{m\times k},\,\R\in \mathbb{R}^{k\times k}$, and $\V\in\mathbb{R}^{n\times k}$. Then 
\begin{eqnarray}\label{et}
{\rm rank}(\X - \U\R^{-1}\V^T) = {\rm rank}(\X) - {\rm rank}(\U\R^{-1}\V^T), 
\end{eqnarray}
if and only if there exist $\A\in \mathbb{R}^{n\times k}$ and $\B\in\mathbb{R}^{m\times k}$ such that $\U = \X\A$, $\V = \X^T\B$, and $\R = \B^T\X\A$.}
{
If we define $\A={\bf e}_j\in\mathbb{R}^{n},\,\,\B={\bf  e}_i\in\mathbb{R}^{m}$ as the $j$-th and $i$-th standard unit vectors\footnote{For the standard unit vector $e_i$, the $i$th element is 1 and the rest are zero.}. Then, we have $\U=\X\A=\X(:,j),\,\,\V=\X^T\B=\X(i,:)^T$ and $\R^{-1}=({\bf e}_i^T{\bf X}{\bf e}_j)^{-1}=\frac{1}{{\bf X}(i,j)}$. Now \ref{et}, demonstrates that 
\begin{eqnarray*}
{\rm rank}(\X - \frac{1}{c}\X(:,j)\X(i,:)) &=& {\rm rank}(\X) - {\rm rank}(\frac{1}{c}\X(:,j)\X(i,:)) \\
&=&{\rm rank}(\X) -1.
\end{eqnarray*}}

{
So, this completes the proof.}
\end{rem}

\end{enumerate}

\end{proof}
In view of Theorem \ref{Thm_1}, we see that the corresponding column and row of the residual matrix $\Y,$ with the same indices as the selected column/row of the original data matrix $\X$ become zero, which means that this approximation interpolates the original data matrix at the mentioned indices and reduces its rank by one order, see Figure \ref{fig:CP} (a) for a graphical illustration of this approach. This procedure is repeated by selecting a new column and a new row of the residual matrix, so we can sequentially reduce the matrix rank and interpolate the original matrix at the new columns/rows. The adaptive cross matrix approximation method is summarized in Algorithm \ref{Cross_Alg}. Clearly, the breakdown can happen in Algorithm \ref{Cross_Alg} if the denominator ${\bf u}_k(i_k),$ becomes zero. If we deal with such a case, we should select a new index $i_k$ for which the ${\bf u}_k(i_k)$ is not zero. For example one can select randomly a new index, which has not been chosen previously. 

\RestyleAlgo{ruled}
\LinesNumbered
\begin{algorithm}
\SetKwInOut{Input}{Input}
\SetKwInOut{Output}{Output}\Input{A data matrix ${\mathbf X} \in {\mathbb{R}^{{I_1} \times {I_2}}}$, an approximation error $\epsilon$}
\Output{Low rank matrix approximation ${\mathbf X}={\U}{\V}$}
${\bf U}=0,\,{\bf V}=0,\,\mu=0,\,r_0=0,$ $j_1$ is a random column index
\caption{Adaptive cross approximation algorithm
(ACA) \cite{savostyanov2006polilinear,tyrtyshnikov2000incomplete,bebendorf2000approximation,bebendorf2006accelerating,zhao2005adaptive} }\label{Cross_Alg}
\For{$k=1,2,\ldots,{\rm min}(I_1,I_2)$}
{
${\bf u}_{k}=\E_{k-1}(:,j_k)=\X(:,j_k)-\U\V(:,j_k)$;\\
$i_k={\rm \arg\max}_i |{\bf u}_k(i)|$;\\
${\bf u}_k\leftarrow {\bf u}_k/{\bf u}_k(i_k)$;\\
${\bf v}_k^T={\bf E}_{k-1}(i_k,:)={\bf X}(i_k,:)-{\bf U}(i_k,:){\bf V}$;\\
$j_{k+1}=\arg\max_{j}|{\bf v}_k(j)|$;\\
$\rho^2=\|{\bf u}_k\|_2^2\|{\bf v}_k\|_2^2$\\
$\mu^2\leftarrow\mu^2+\rho^2+2\sum_{j=1}^{k-1}{\bf V}(j,:){\bf v}_k{\bf u}_k^T{\bf U}(:,j)$;\\
${\bf U}\leftarrow [{\bf U},{\bf u}_k],\,{\bf V}\leftarrow [{\bf V};{\bf v}^T_k];$\\
$\,r_k=r_{k-1}+1$;\\
\If{$\rho<\epsilon\mu$}{{\bf Break}}
}       	
\end{algorithm}

\section{Proposed adaptive tensor cross approximation based on the t-product}\label{Sec:PTACA}
In this section, we show how to generalize the adaptive cross matrix approximation to the tensor case based on the t-product. Compared to the matrix case, instead of a column and a row, here we select a lateral slice and a horizontal slice at each iteration but the important question is how to use the intersection tube for scaling the corresponding tubal rank-1 tensor so that the tubal rank of the residual tensor is reduced one in order. We found that the inverse of the intersection tube should be used and this is proved in Theorem \ref{Thm_2}. 
\begin{thm}(Tubal rank-1 deflation)\label{Thm_2}
Let $\cX\in\mathbb{R}^{I_1\times I_2\times I_3}$ be a given data tensor with sampled lateral and horizontal slices as $\cX(:,j,:)$ and $\cX(i,:,:)$ with the nonzero intersection tube $\X(i,j,:)$. Then the residual tensor 
\begin{equation}\label{Residual}
\cY=\cX-\cX(:,j,:)*(\cX(i,j,:))^{-1}*\cX(i,:,:),
\end{equation}
vanishes at its $i$-th lateral slice and $j$-th horizontal slice and ${\rm rank}(\cY)={\rm rank}(\cX)-1$.  
\end{thm}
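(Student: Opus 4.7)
The plan is to pass to the Fourier domain along the third mode, where the t-product collapses to frontal-slice-wise matrix multiplication (cf.\ the block-diagonalisation identity around \eqref{eq_a}), and to reduce the statement to $I_3$ independent applications of the matrix rank-$1$ deflation of Theorem~\ref{Thm_1}. First I would transport the three factors of the correction term in \eqref{Residual} through the FFT: $\cX(:,j,:)$ becomes the $j$-th column of each frontal slice $\widehat{\cX}(:,:,k)$, $\cX(i,:,:)$ becomes the $i$-th row, and by Algorithm~\ref{ALG:Moore-Penrose} the tube inverse $(\cX(i,j,:))^{-1}$ is simply the scalar inversion of $\widehat{\cX}(i,j,k)$ in every slice. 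This would give
\[
\widehat{\cY}(:,:,k)=\widehat{\cX}(:,:,k)-\frac{1}{\widehat{\cX}(i,j,k)}\,\widehat{\cX}(:,j,k)\,\widehat{\cX}(i,:,k),\qquad k=1,\ldots,I_3,
\]
which is exactly the matrix rank-$1$ deflation of Theorem~\ref{Thm_1} applied to each frontal slice $\widehat{\cX}(:,:,k)$ with pivot index pair $(i,j)$ and nonzero pivot value $\widehat{\cX}(i,j,k)$.

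Given this slice-wise representation, I would invoke Theorem~\ref{Thm_1} once per $k$. On the one hand, this yields that the $i$-th row and $j$-th column of $\widehat{\cY}(:,:,k)$ vanish for every $k$; taking the inverse FFT along mode~$3$, this translates into $\cY(i,:,:)=0$ and $\cY(:,j,:)=0$ in the spatial domain. Both identities can also be verified directly from \eqref{Residual} using $\cX(i,j,:)*(\cX(i,j,:))^{-1}=\underline{\bf I}_{1\times 1\times I_3}$ together with the slicing rule for the t-product, which I would include as a sanity check. On the other hand, Theorem~\ref{Thm_1} gives ${\rm rank}(\widehat{\cY}(:,:,k))={\rm rank}(\widehat{\cX}(:,:,k))-1$ for every $k$. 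Since the tubal rank of a third-order tensor equals the largest rank among its Fourier-domain frontal slices (equivalently, the number of nonzero singular tubes in its t-SVD), the simultaneous decrease by one in every slice forces ${\rm rank}(\cY)={\rm rank}(\cX)-1$.

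The main subtlety I would need to clean up carefully is the standing hypothesis on the intersection tube. For the t-inverse $(\cX(i,j,:))^{-1}$ to exist at all, each Fourier coefficient $\widehat{\cX}(i,j,k)$ must be nonzero, which is strictly stronger than the tube $\cX(i,j,:)$ being nonzero in the spatial domain, and it is precisely what guarantees the nonzero pivot needed by Theorem~\ref{Thm_1} in every Fourier-domain slice. I therefore read ``nonzero intersection tube'' in the hypothesis in the stronger sense of ``invertible intersection tube''; if invertibility fails at some $k$, a different $(i,j)$ pivot pair must be selected, in direct analogy with the breakdown remedy discussed after Algorithm~\ref{Cross_Alg} for the matrix case.
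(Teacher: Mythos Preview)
Your proposal is correct and follows essentially the same approach as the paper: pass to the Fourier domain, apply Theorem~\ref{Thm_1} to each frontal slice $\widehat{\cX}(:,:,k)$, and read off both the vanishing of the $i$-th horizontal and $j$-th lateral slices and the rank drop. Your treatment is in fact more careful than the paper's own proof, particularly in flagging that invertibility of the intersection tube (i.e.\ $\widehat{\cX}(i,j,k)\neq 0$ for all $k$), rather than mere nonvanishing in the spatial domain, is the hypothesis actually required, and in explicitly invoking the characterisation of tubal rank via the ranks of the Fourier-domain frontal slices.
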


\RestyleAlgo{ruled}
\LinesNumbered
\begin{algorithm}
\SetKwInOut{Input}{Input}
\SetKwInOut{Output}{Output}\Input{A data tensor $\cX \in {\mathbb{R}^{{I_1} \times {I_2\times I_3} }}$, an approximation error $\epsilon$}
\Output{Low tubal rank tensor approximation $\cX={\cU}*{\cV}$}
$\cU=0,\,\cV=0,\,\mu=0,\,r_0=0,$ $j_1$ is a random lateral slice index
\caption{Proposed adaptive cross tubal tensor approximation algorithm
(ACTA)}\label{CrossTensor_Alg}
\For{$k=1,2,\ldots,{\rm min}(I_1,I_2)$}
{
$\underline{\bf u}_{k}=\cE_{k-1}(:,j_k,:)=\cX(:,j_k,:)-\cU*\cV(:,j_k,:)$;\\
$i_k={\rm \arg\max}_i\,\, ||\underline{\bf u}_k(i,j_k,:)||_2^2$;\\
${\underline{\bf u}}_k\leftarrow {\underline{\bf u}}_k*({\underline{\bf u}}_k(i_k,j_k,:))^{-1}$;\\
${\underline{\bf v}}_k^T=\cE_{k-1}(i_k,:,:)=\cX(i_k,:,:)-\cU(i_k,:,:)*\cV$;\\
$j_{k+1}=\arg\max_{j}\,\,||{\underline{\bf v}}_k(i_k,j,:)||_2^2$;\\
$\rho^2=\|{\underline{\bf u}}_k\|_F^2\|{\underline{\bf v}}_k\|_F^2$;\\
$\mu^2\leftarrow\mu^2+\rho^2+2\|\sum_{j=1}^{k-1}{\underline{\bf v}}(j,:,:)*{\underline{\bf v}}_k*{\underline{\bf u}}_k^T*{\underline{\bf u}}_k(:,j,:)\|_2^2$;\\
$\cU\leftarrow [\cU,{\underline{\bf u}}_k],\,\cV\leftarrow [\cV;{\underline{\bf v}}^T_k];$\\
$r_k=r_{k-1}+1$;\\
\If{$\rho<\epsilon\mu$}{{\bf break}}
}       	
\end{algorithm}

\begin{proof}
To prove Theorem \ref{Thm_2}, we show that the $i$-th row and the $j$-th column of each frontal slice of the residual tensor $\cY$ are zero. To do this, let us consider the $k$-th frontal slice of the residual tensor $\cY$ as $\cY(:,;,k)$. In the Fourier domain, it can be represented as 
\begin{eqnarray}\label{resiFour}
\widehat{\cY}(:,:,k)=\widehat{\cX}(:,:,k)-\frac{1}{\widehat{\cX}(i,j,k)}\widehat{\cX}(:,j,k)\widehat{\cX}(i,:,k).
\end{eqnarray}
In view of Theorem \ref{Thm_1}, this means that the $i$-th row and the $j$-th column of the the $k$-th frontal slice of the tensor $\cY$ in the Fourier domain are zero and its rank is one order lower than the rank of the matrix $\widehat{\bf X}(:,:,k)$. So the $i$-th row and the $j$-th column of the all frontal slices  $\cX(:,:,k),\,k=1,2,\ldots,I_3$ equal to zero. This clearly completes the proof.
\end{proof}
It is not difficult to see that for second-order tensors (matrices), Equation \eqref{Residual} is reduced to the classical matrix cross approximation. At each iteration, we select a lateral slice and a horizontal slice and perform the scaling using the pseudoinverse of the intersection tube. The corresponding scaled tubal rank-1 tensor reduces the tubal rank of the underlying data tensor by one order. It is interesting to note that similar to the matrix case where after each iteration the corresponding selected column and row in the residual matrix become zeros, here the corresponding lateral and horizontal slices of the residual tensor vanish. So, naturally, this approximation interpolates the original tensor at the mentioned slices. This procedure can proceed with the residual tensor to reduce the tubal rank sequentially. The generalized adaptive cross tubal approximation method is outlined in Algorithm \ref{CrossTensor_Alg}. In Line 10 of Algorithm \ref{CrossTensor_Alg}, the new horizontal and lateral slicers are concatenated along the second and first modes, respectively. {The relative error accuracy is used for the stopping criterion as $\rho\leq \epsilon\mu$ according to 
\begin{eqnarray*}
\rho=\|\underline{\bf u}_k*\underline{\bf v}^T_k\|_F&\approx&\|\underline{\bf X}-\underline{\bf U}*\underline{\bf V}\|_F,\\
\mu=\|\underline{\bf U}*\underline{\bf V}\|_F&\approx&\|\underline{\bf X}\|_F,
\end{eqnarray*}
where $\underline{\bf U}=[\underline{\bf u}_1,\ldots,\underline{\bf u}_{k-1}]$ and $\underline{\bf V}=[\underline{\bf v}_1;\ldots;\underline{\bf v}_{k-1}]$. It is necessary to enforce $i_k\neq i_1,i_2,\ldots,i_{k-1}$ and $j_k\neq j_1,j_2,\ldots,j_{k-1},$ which means that each iteration should produce new indices (different from the others).
We also remark that if the size of a frontal/lateral slice is big, one can compress it using the classical cross methods, similar to \cite{oseledets2008tucker} where the cross approximation is used in two stages for the computation of the Tucker decomposition. Although our results so far are for third-order tensors, clearly they can be straightforwardly generalized to tensors of a higher order than three, according to \cite{martin2013order}.}

\section{Computational complexity}\label{sec:compcomp}
The adaptive cross tensor algorithm is efficient as it only works on a lateral slice and a horizontal slice at each iteration. The computational complexity of Algorithm \ref{CrossTensor_Alg} is $\mathcal{O}((I+J)K\log(K))$. 
The computational complexity of the truncated t-SVD for a tensor of the size $I\times J\times K$ is 
$\mathcal{O}(IJK{\rm log}(K))+\mathcal{O}(IJK{\rm min}(I,J))$. Besides, the truncated t-SVD needs to access and process the whole data tensor while the proposed algorithm works only on a part of the lateral slice and  horizontal slices at each iteration. So, it is clearly seen that the proposed Algorithm \ref{CrossTensor_Alg} requires much less memory and computational operation than the t-SVD algorithm. This makes it applicable for decomposing large-scale tensors.

\section{Experimental Results}\label{Sec:Exper}
We have used \textsc{Matlab} and some functions of the toolbox 

\url{https://github.com/canyilu/Tensor-tensor-product-toolbox} 

to implement the proposed algorithm using a laptop computer with 2.60 GHz Intel(R) Core(TM) i7-5600U processor and 8GB memory. {We have used two metrics, {\it relative error} and Peak signal-to-noise ratio (PSNR) to compare the efficiency of the proposed algorithm with the baselines. The relative error is defined as follows
\[
{\rm Relative\,\,error}=\frac{\|\cX-\cU*\underline{\bf S}*\cV^T\|_F}{\|\cX\|_F}.
\]
The PSNR is also defined as
\[{\rm{PSNR = 10lo}}{{\rm{g}}_{{\rm{10}}}}\left( {{{255}^2}/{\rm{MSE}}} \right),
\]
where ${\rm{MSE}} = \left\| {\underline{\bf X} - \widehat{\underline{\bf X}}} \right\|_F^2/{\rm{num}}\left( \underline{\bf X} \right).$
Note ``num'' denotes the number of parameters of a given data tensor.
We mainly consider three examples. In the first example, we examine the algorithms using low-rank random data tensors. In the second example, we have used the functional based tensors. In the last example, we used the images as real-world data with application to the image completion problem.}

\begin{exa}\label{exm1}
In this example we consider a random data tensor $\X\in\mathbb{R}^{N\times N\times N}$ with exact tubal rank $R=30$ for $N=100,200,\ldots,600$. To generate such a tensor, we considered two standard Gaussian tensors and orthonormalize them. Let us denote these orthogonal parts by $\cU\in\mathbb{R}^{N\times R\times N}$ and $\cV\in\mathbb{R}^{N\times R\times N}$. Then, we generate a tensor $\underline{\mathbf S}\in\mathbb{R}^{R\times R\times N}$ with only $R$ nonzero diagonal tubes $\underline{\mathbf S}(i,i,:),\,i=1,2,\ldots,R$ whose elements are also standard Gaussian and build the tensor $\cX=\cU*\underline{\mathbf S}*\cV^T,$ which is used in our simulations. Assume that $R=30$ in our simulations, and set $\epsilon=10^{-8}$ in Algorithm \ref{CrossTensor_Alg}. Then, we apply the proposed algorithm to find the tubal rank and the corresponding low tubal rank approximation. We consider 100 Monte Carlo experiments and report the mean of our results (accuracy and running time). In all our experiments, the proposed approach retrieved the true tubal rank successfully, and this convinced us that it works well for finding the tubal rank of a tensor. Then we used the truncated t-SVD and the randomized t-SVD \cite{tarzanagh2018fast} to compute low tubal rank approximations of the underlying data tensor. The running time of the proposed algorithm, the truncated t-SVD, the randomized t-SVD are compared in Figure \ref{fig1} (right). The numerical results show almost two orders of magnitude speed-up of the proposed approach compared with the truncated t-SVD algorithm, while it is also faster than the randomized t-SVD. The accuracy comparison of the algorithms is also presented in Figure \ref{fig1} (left). This illustrates that the proposed algorithm can provide acceptable results in much less time than the truncated t-SVD algorithm and the randomized t-SVD.  

\begin{figure}
\begin{center}
    \includegraphics[width=0.49\linewidth]{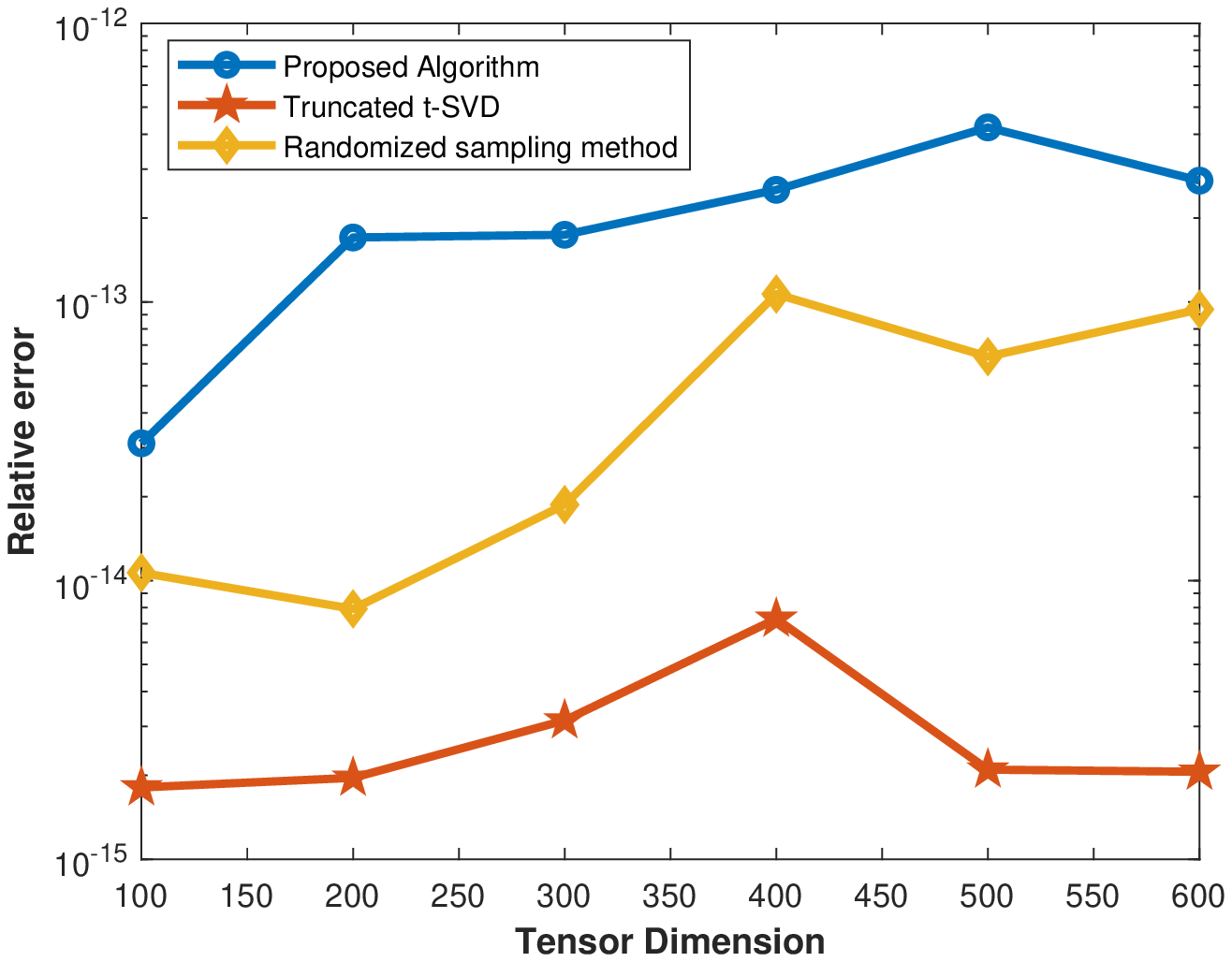}
    \includegraphics[width=0.49\linewidth]{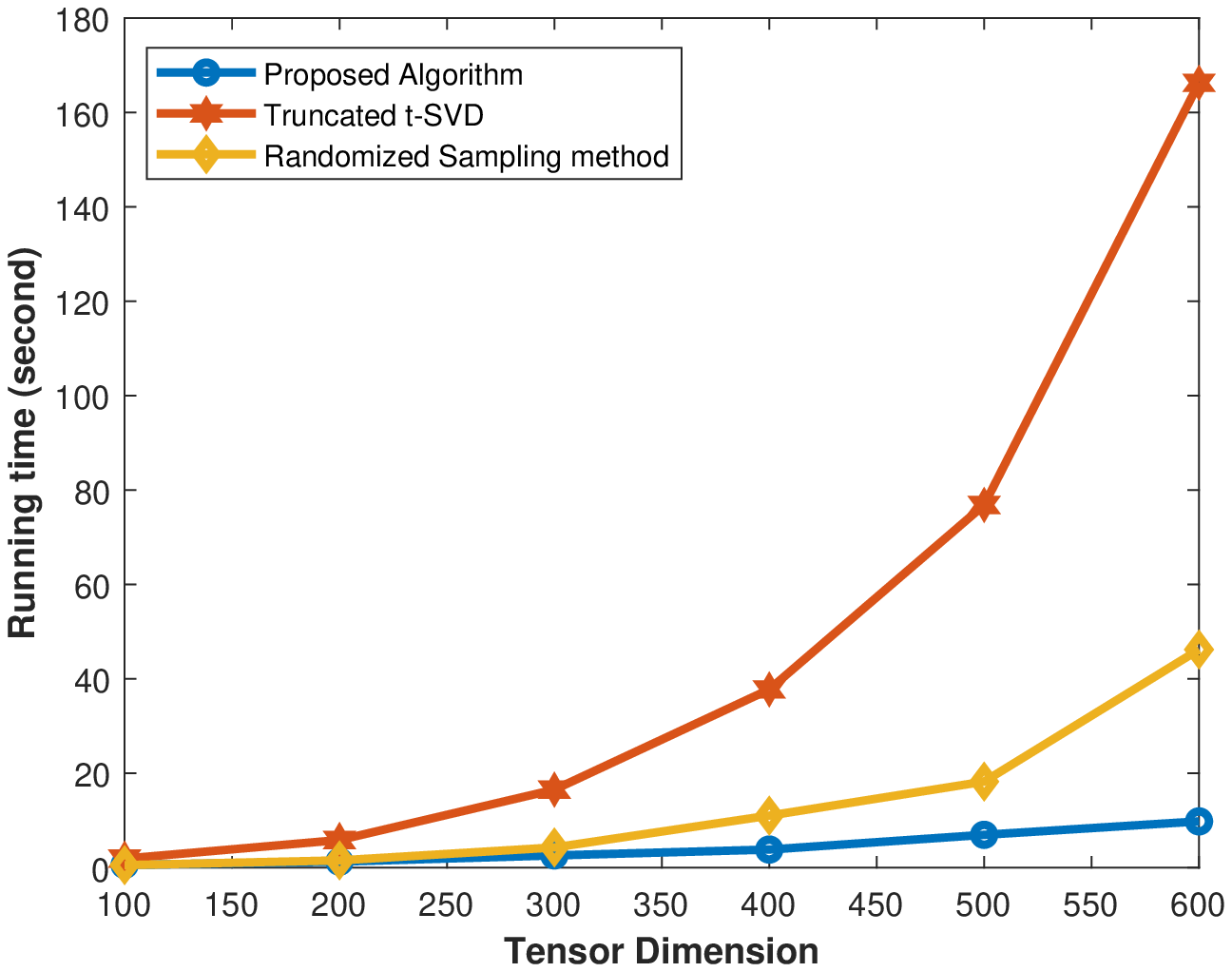}
    \caption{The running time and accuracy comparisons of the proposed algorithm and the truncated t-SVD for Example \ref{exm1}.}\label{fig1}
    \end{center}
\end{figure}

\begin{figure}
\begin{center}
    \includegraphics[width=0.49\linewidth]{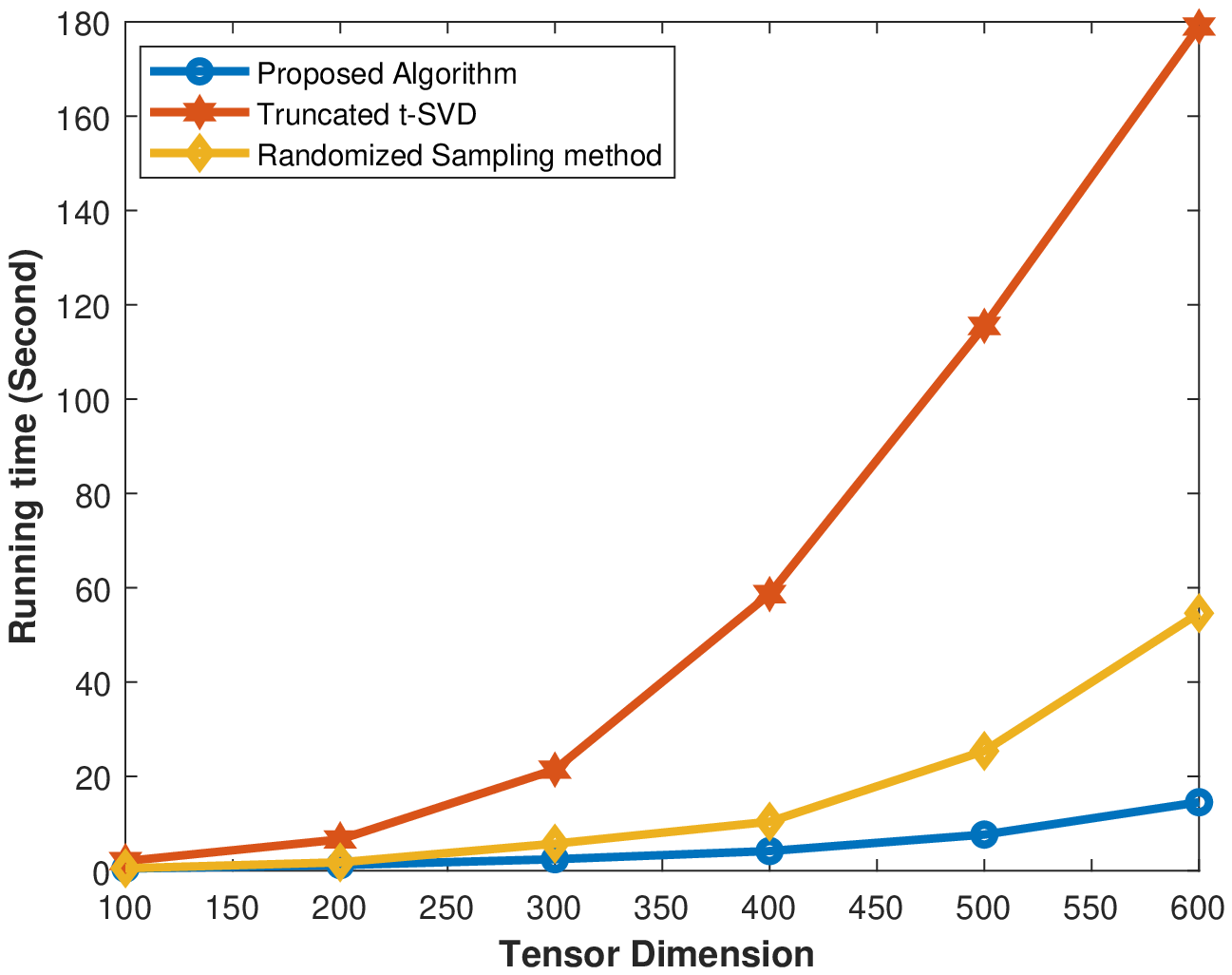}
    \includegraphics[width=0.49\linewidth]{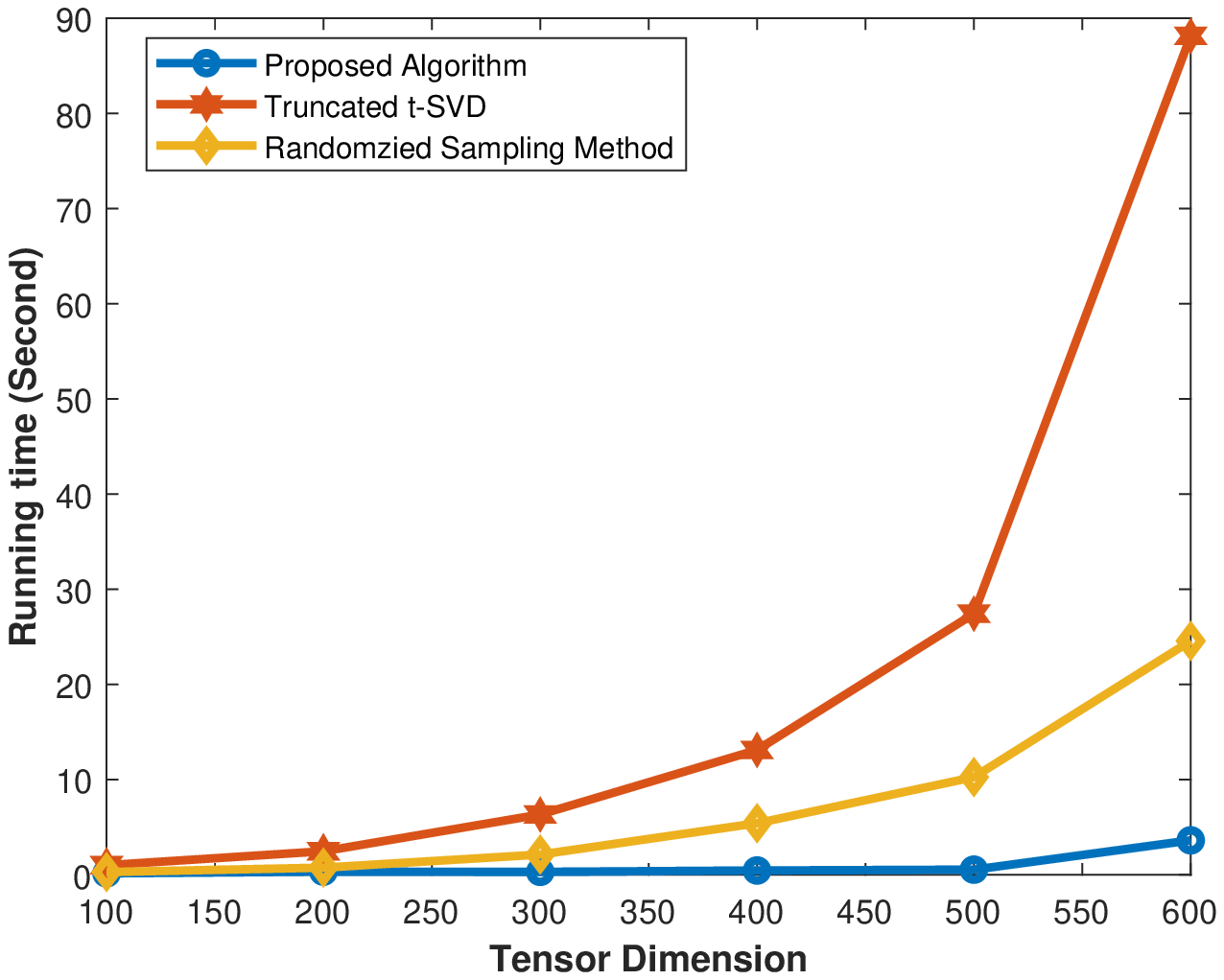}
    \includegraphics[width=0.49\linewidth]{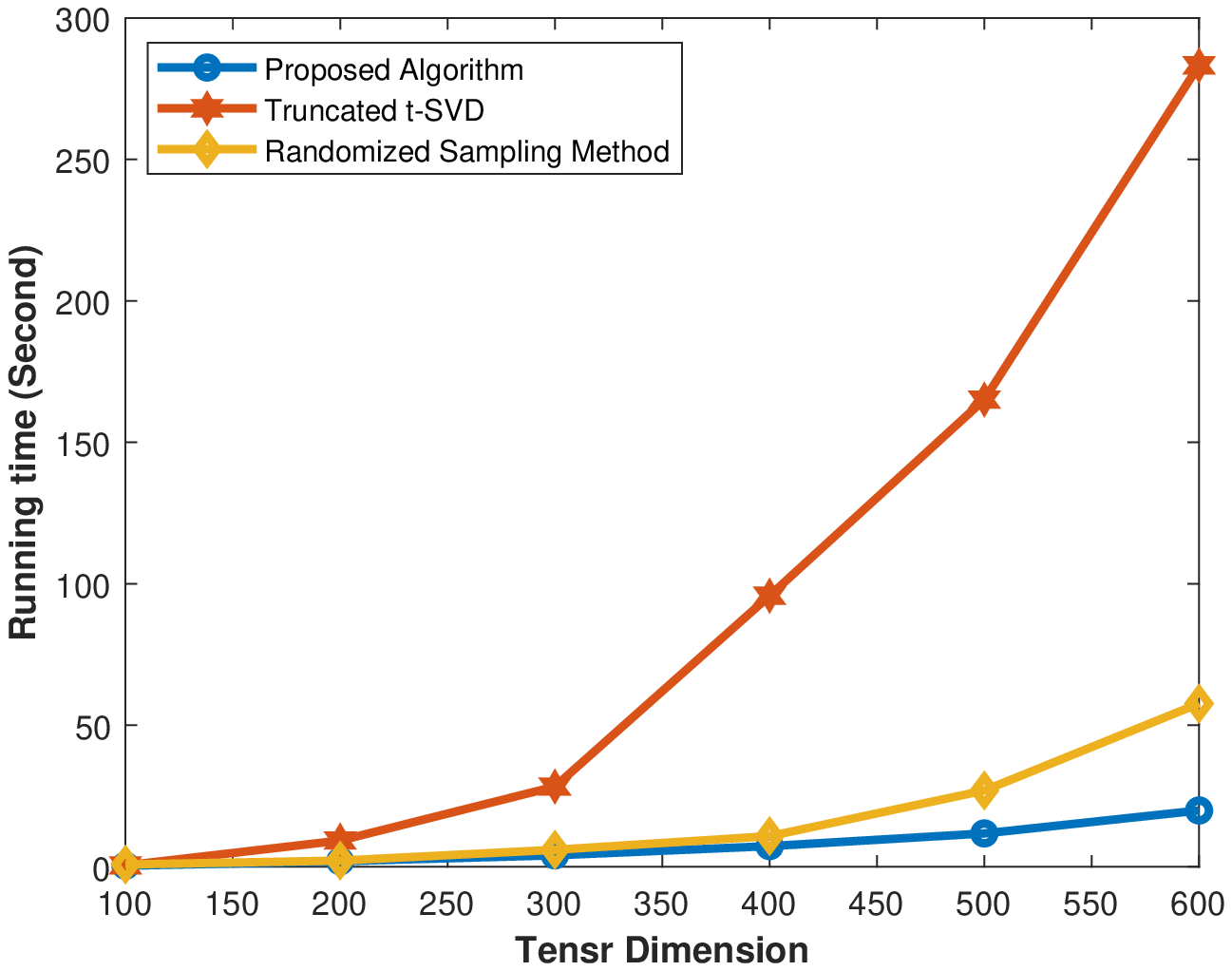}
    \caption{The running time comparison of the truncated t-SVD and the proposed algorithm for Case I (upper left), Case II (upper right) and Case III (bottom) for Example \ref{exm2}.}\label{fig2}
    \end{center}
\end{figure}

\begin{figure}
\begin{center}
    \includegraphics[width=0.7\linewidth]{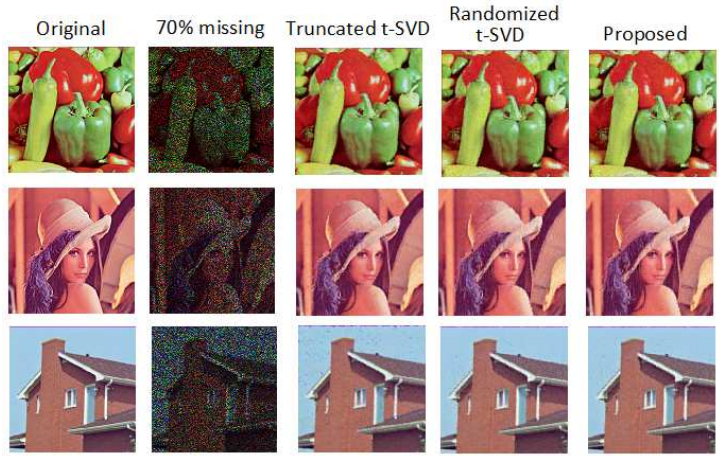}
    \caption{The original image, the available with 70\% missing pixels (randomly) and the reconstructed images using the truncated t-SVD and the proposed approach for Example \ref{exm3}.}\label{fig3}
    \end{center}
\end{figure}

\end{exa}

\begin{exa}\label{exm2}
In this example, we apply Algorithm \ref{CrossTensor_Alg} to compute low tubal rank approximations of function based tensors. To do so, we consider the following case studies:
\begin{itemize}
\item Case study I: $\quad\cX(i,j,k)=\frac{1}{(i^2+j^2+k^2)^{1/2}};$
    
\item Case study II: $\quad\cX(i,j,k)={\sin{(i+j+k)}}+\tanh(i+j+k);$
    
\item Case study III: $\quad\cX(i,j,k)=\frac{1}{(i^5+j^5+k^5)^{1/5}};$
\end{itemize}
where $1\leq i,j,k\leq n$ for $n=100,200,\ldots,600.$ It is not difficult to see that these tensors have low tubal ranks. The numerical tubal rank for case studies I, II and II for a tensor of size $100\times 100\times 100$ were 25, 5 and 43, respectively. However, for larger sizes the numerical tubal rank may be slightly changed. We applied the proposed approach to the mentioned data tensors in a similar way as for Example 1, to find the numerical tubal rank and the corresponding low tubal rank approximation. Here, for the case studies I, II and III, the proposed algorithm for $\epsilon=10^{(-8)}$ gave tubal ranks 24, 5 and 42, respectively, which is very close to the true numerical tubal ranks. Then for these tubal ranks, we applied the truncated t-SVD and the randomized t-SVD to compute a low tubal rank approximation. The running time and relative errors of the solutions of the algorithms are compared in Figure \ref{fig2} and Table \ref{Table1}, respectively. In view of Figure \ref{fig2}, the performance of the proposed algorithm compared with the truncated t-SVD and the randomized t-SVD is visible. These two experiments verified that the proposed approach is applicable for large-scale tensors because it works only on a small a part of the data tensor at each iteration while the classical approaches, e.g. the truncated t-SVD deals with the whole data tensor. The results in Table \ref{Table1} also show that the proposed algorithm provides an approximation with almost the same accuracy as the truncated t-SVD, which is known to be the best approximation in the lease-squares sense\footnote{For any unitary
invariant tensor norm.} for the low tubal rank approximation \cite{kilmer2011factorization}. 


\begin{table}
\begin{center}
\caption{Relative errors of results obtained by the truncated t-SVD and the proposed Algorithm for Example \ref{exm2}.}\label{Table1}
\smaller
\begin{tabular}{||c| c c c c c c||} 
\multicolumn{7}{c}{Case study I}\\
 \hline
 \diagbox[width=7em]{Methods}{N}& 100 & 200 & 300 & 400 & 500 & 600\\
 \hline\hline
 Truncated t-SVD \cite{kilmer2011factorization} & 1.6e-14 & 8.01e-13 & 6.9e-12 & 2.4e-11  & 5.9e-11 & 1.1e-10\\ 
 Randomized t-SVD \cite{tarzanagh2018fast} &  4.9e-14 & 8.4e-12 & 4.4e-11 & 1.5e-10 & 3.7e-10 & 7.7e-10\\ 
 Proposed algorithm  &3.1e-14 &  1.01e-12 & 3.5e-11& 2.95e-10 & 3.2e-10 & 9.7e-10\\
\hline
 \multicolumn{7}{c}{Case study II}\\ 
 \hline\hline
 Truncated t-SVD \cite{kilmer2011factorization} & 1.0e-15 & 1.6e-15 & 1.3e-15 & 1.7e-15 &  2.09e-15 & 2.5e-15 \\ 
  Randomized t-SVD \cite{tarzanagh2018fast} & 1.2e-15 & 1.5e-15 & 1.6e-15 & 2.4e-15 & 2.06e-15 &  2.3e-15  \\ 
 Proposed algorithm  &1.3e-15 & 1.5e-15 & 1.7e-15  & 1.8e-15 & 2.06e-15 & 2.3e-15  \\
 \hline
 \multicolumn{7}{c}{Case study III}\\
 \hline\hline
 Truncated t-SVD \cite{kilmer2011factorization} & 2.7e-14 & 1.3e-11 & 1.3e-10 &  5.04e-10 & 1.1e-09 &2.2e-09 \\ 
  Randomized t-SVD \cite{tarzanagh2018fast} & 1.9e-13 & 1.2e-10 & 1.01e-09 & 3.5e-09 & 6.5e-09 & 1.3e-08 \\ 
 Proposed algorithm  & 5.3e-14 & 7.7e-11 & 4.7e-10 & 7.1e-09 & 3.2e-09 & 2.1e-08\\
 \hline
\end{tabular}
\end{center}
\end{table}

\end{exa}

\begin{exa}\label{exm3}
{\bf Application to tensor completion.} In this example, we show the application of the proposed adaptive algorithm for the task of tensor completion. To this end, we consider the benchmark images ``Peppers'', ``Lena'' and ``House'' that of size $256\times 256 \times 3$ depicted in Figure \ref{fig3} (left) and remove 70\% of their pixels randomly shown in Figure \ref{fig3} (middle). We use the Peak signal-to-noise ratio (PSNR) to compare the performance of the proposed algorithm with the benchmark algorithm. The tensor decomposition formulation \eqref{MinRankCompl2} for the tensor completion problem is written as follows
\begin{equation}\label{MinRankCompl2}
\begin{array}{cc}
\displaystyle \min_{\underline{\bf X}} & {\|{{\bf P}_{\underline{\bf\Omega}} }({\underline{\bf X}})-{{\bf P}_{\underline{\bf\Omega}} }({\underline{\bf M}})\|^2_F},\\
\textrm{s.t.} & {\rm rank}(\underline{\bf X})=R,\\
\end{array}
\end{equation}
where the unknown tensor $\underline{\bf X}$ to be determined, and we assume that it has low tensor rank representation, $\underline{\bf M}$ is the original data tensor and ${\underline{\bf\Omega}}$ is the index of known pixels. The projector ${\bf P}_{\underline{\bf \Omega}}$ is defined as follows
\[
{\bf P}_{\underline{\bf \Omega}}(\X)=
\left\{
	\begin{array}{ll}
		\X({\bf i})  &  {\bf i}\in\underline{\bf\Omega},\\ 
		0 & {\bf i}\notin\underline{\bf\Omega},
	\end{array}
\right.
\]
where ${\bf i}=(i_1,i_2,\ldots,i_N)$ is an arbitrary multi-index with $1\leq i_n\leq I_n,\,\,n=1,2,\ldots,N$. Here, different kinds of tensor ranks and associated tensor decompositions can be considered in the formulation \eqref{MinRankCompl2}. The solution to the minimization problem \eqref{MinRankCompl2} can be approximated by the following iterative procedure 
\begin{equation}\label{Step1}
\underline{\mathbf Y}^{(n)}= \mathcal{L}(\underline{\mathbf X}^{(n)}),
\end{equation}
\begin{equation}\label{Step2}
\underline{\mathbf X}^{(n+1)}=\underline{\mathbf \Omega}\oast\underline{\mathbf Y}^{(n)}+(\underline{\mathbf 1}-\underline{\mathbf \Omega})\oast\underline{\mathbf Y}^{(n)},
\end{equation}
as described in \cite{ahmadi2022cross} to complete the unknown pixels where $\mathcal{L}$ is an operator, which computes a low-rank tensor approximation of the data tensor $\underline{\mathbf X}^{(n)}$,  $\underline{\mathbf 1}$ is a tensor whose all components are equal to one and $\oast$ is the Hadamard (elementwise) product. For the low-rank computations in the first step \eqref{Step1}, we apply the proposed Algorithm \ref{CrossTensor_Alg} with a given number of iterations and not a given tolerance to find the lateral and horizontal slice indices and compute the approximation $\underline{\bf C}*\underline{\bf U}*\underline{\bf R}$ where $\underline{\bf U}=\underline{\bf C}^{\dag}*\underline{\bf X}*\underline{\bf R}^{\dag}$ and $\underline{\bf C},\,\underline{\bf R}$ are the sampled lateral and horizontal slices, respectively. The tubal rank $R=70$, was used in our computations. Beside applying the proposed algorithm, we also used the truncated t-SVD and the randomized t-SVD in our computations. The reconstructed images using the proposed approach and the truncated t-SVD are displayed in Figure \ref{fig3} (bottom). The running time required to compute these reconstructions and also their PSNR are reported in Table \ref{Table1}. The results in Table \ref{Table1} and Figure \ref{fig3}, clearly illustrate that the proposed adaptive algorithm provides comparable results in much less running time. This clearly shows the feasibility and efficiency of the proposed algorithm for fast tensor completion task. 

\begin{table}
\begin{center}
\caption{Running time (second ) and PSNR of the constructed images using the truncated t-SVD and the proposed Algorithm for Example \ref{exm3}, Time (s) and PSNR (dB).}\label{Table1}
\vspace{0.2cm}
\begin{tabular}{||c| c c | c c | c c||} 
 \hline
 \cline{2-7}
 & \multicolumn{2}{c}{{Peppers}} & \multicolumn{2}{c}{{Lena}} & \multicolumn{2}{c||}{{House}}\\
  \cline{2-7}
  & Time & PSNR & Time & PSNR & Time & PSNR\\
 \hline\hline
 Truncated t-SVD \cite{kilmer2011factorization} & 55.03  & { 27.74} & 45.2  & { 27.36} & 61.94  & 28.05 \\ 
  Randomized t-SVD \cite{tarzanagh2018fast}& 10.12  & 27.55 & 11.23  & 27.40 & 10.83  & 29.40 \\ 
 Proposed algorithm  & { 7.60 } & 27.55 & { 8.34 } & { 27.46} & { 7.27 } & { 29.59}\\
 \hline
\end{tabular}
\end{center}
\end{table}

\end{exa}

\begin{exa}\label{Ex_4}
{
{\bf Application to PEdesTrian Attribute Recognition task.}
In this experiment, we show an application of the proposed method for the Pedestrian Attributes Recognition (PAR) task \cite{wang2022pedestrian}. We consider the PEdesTrian Attribute dataset (PETA) dataset \cite{deng2014pedestrian}, which was widely used in the literature for the PAR problem. It includes 8705 different persons in the 19000 pedestrian images, which have 65 attributes (61 binary and 4 multi-class). Although the images are not all the same size, we resize them in this experiment to $256\times 128\times 3$. We only take into account $1012$ images and construct a fourth-order tensor with the size $256\times 128\times 3\times 1012$ and reshape it into a third-order tensor with the size $256\times 384\times 1012$. With an error bound of $\epsilon= 0.1$, we applied the suggested approach to the above dataset to determine the relevant tubal-rank. The truncated t-SVD of the underlying dataset was then computed for this tubal-rank. The reconstructed images that were obtained by them for two random samples using the proposed algorithm and compared to the truncated t-SVD are shown in Figure \ref{fig_recon}. Here we achieved $\times 5$ speed-up compared to the truncated t-SVD. The results unequivocally show that the suggested technique can produce similar results in less computational time. Additionally, the Attribute-specific Localization (ASL) model \cite{tang2019improving}, an effective deep neural network (DNN), was taken into account as it had provided cutting-edge results for the PAR problem. In order to create a lightweight model with fewer parameters and complexity \cite{Ashish2023}, we first compressed the underlying convolution layers in the ASL model using the Error Preserving Correction-CPD \cite{phan2018error}  and the SVD. As our test datasets, we also compressed $30\%$ of the PETA dataset's images using Algorithm \ref{CrossTensor_Alg} (for $\epsilon= 0.1,0.2,$ and $0.3$) and compared the ASL model's performance in identifying pedestrian features in the compressed and original images.
}{Table \ref{Table_new} displays the experiment's outcomes. We see that the light-weight model's accuracy for both the original and compressed images is quite similar. It should be noted that the model was not trained on compressed photos, though one may do so to improve recognition accuracy. As a result, this concept may be applied to Internet of Things (IoT) applications where tremendous amounts of data in various shapes and formats are generated (for example, image sensors embedded in mobile cameras produce enormous amounts of data in the form of higher-resolution photographs and videos). Here, it is fundamental to install compacted DNNs and portable DL models on the edge of the IoT network, along with having fast data communication for real-time applications (denoising, defogging, deblurring, segmentation, target detection, and recognition). Using the suggested method, we may use the compressed form of the data in these applications.}

\begin{figure}
\begin{center}
    \includegraphics[width=0.50\linewidth]{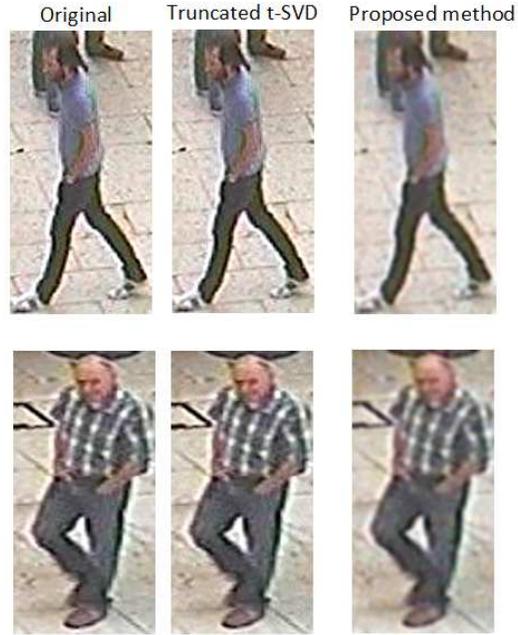}
    \caption{Comparing the original images and their compressed forms by the proposed algorithm and the truncated t-SVD for Example \ref{Ex_4}.}\label{fig_recon}
    \end{center}
\end{figure}

\begin{table}
\begin{center}
\caption{Comparing the running times (second) and relative errors achieved by the proposed algorithm and the Truncated t-SVD \cite{kilmer2011factorization} with original accuracy {\bf 0.8887} for Example \ref{Ex_4}.}\label{Table_new}
\begin{tabular}{||c| c |c||} 
 \multicolumn{3}{c}{$\epsilon=0.3$}
 \\\hline
Algorithms  & Running Time (s) & Recognition accuracy\\
 \hline\hline
 Truncated t-SVD \cite{kilmer2011factorization} &  236.45 &  { 85.86\%}\\ 
 Proposed algorithm  & { 45.56} & 84.36\%  \\
 \hline
  \multicolumn{3}{c}{$\epsilon=0.2$}\\
  \hline
  Truncated t-SVD \cite{kilmer2011factorization} & 196.45  & { 87.21\%} \\ 
 Proposed algorithm  & { 35.97} &  86.16\% \\
 \hline
  \multicolumn{3}{c}{$\epsilon=0.1$}\\
  \hline
  Truncated t-SVD \cite{kilmer2011factorization} &  150.32 & { 88.42\%}\\ 
 Proposed algorithm  & { 27.12}   & 87.21\%  \\\hline
\end{tabular}
\end{center}
\end{table}

\end{exa}

\section{Conclusion and future works}\label{Sec:Con}
In this work, we proposed an adaptive tubal tensor approximation algorithm for the computation of the tensor SVD. The proposed algorithm can estimate the tubal rank of a tensor and provide the corresponding low tubal rank approximation. The experimental results verified the feasibility of the proposed algorithm. Our future work will be developing a blocked version of the proposed adaptive tubal tensor algorithm. The block version can be further improved using the parallel hierarchical strategy \cite{liu2020parallel} and we will investigate this in future works. In the matrix case, it is known that the  maximum volume (maxvol) algorithm as a matrix cross approximation method provides close to optimal low-rank approximations. Generalization of the maxvol approach from the matrix case to tensors based on the t-product is our ongoing research work.

\section{Acknowledgement} The authors would like to thank the editor and two reviewer reviewers for their constructive comments, which have greatly improved the quality of the paper. The
work was partially supported by the Ministry of Education and Science of the Russian
Federation (grant 075.10.2021.068).

\section{Conflict of Interest Statement}
The authors declare that they have no
conflict of interest with anything.

\bibliographystyle{elsarticle-num} 
\bibliography{cas-refs}

\begin{thebibliography}{10}
\expandafter\ifx\csname url\endcsname\relax
  \def\url#1{\texttt{#1}}\fi
\expandafter\ifx\csname urlprefix\endcsname\relax\def\urlprefix{URL }\fi
\expandafter\ifx\csname href\endcsname\relax
  \def\href#1#2{#2} \def\path#1{#1}\fi

\bibitem{oseledets2011tensor}
I.~V. Oseledets, Tensor-train decomposition, SIAM Journal on Scientific
  Computing 33~(5) (2011) 2295--2317.

\bibitem{tucker1964extension}
L.~R. Tucker, et~al., The extension of factor analysis to three-dimensional
  matrices, Contributions to mathematical psychology 110119 (1964).

\bibitem{de2000multilinear}
L.~De~Lathauwer, B.~De~Moor, J.~Vandewalle, A multilinear singular value
  decomposition, SIAM journal on Matrix Analysis and Applications 21~(4) (2000)
  1253--1278.

\bibitem{hitchcock1928multiple}
F.~L. Hitchcock, Multiple invariants and generalized rank of a p-way matrix or
  tensor, Journal of Mathematics and Physics 7~(1-4) (1928) 39--79.

\bibitem{hitchcock1927expression}
F.~L. Hitchcock, The expression of a tensor or a polyadic as a sum of products,
  Journal of Mathematics and Physics 6~(1-4) (1927) 164--189.

\bibitem{de2008decompositionsI}
L.~De~Lathauwer, Decompositions of a higher-order tensor in block
  terms—{P}art {II}: Definitions and uniqueness, SIAM Journal on Matrix
  Analysis and Applications 30~(3) (2008) 1033--1066.

\bibitem{zhao2016tensor}
Q.~Zhao, G.~Zhou, S.~Xie, L.~Zhang, A.~Cichocki, Tensor ring decomposition,
  arXiv preprint arXiv:1606.05535 (2016).

\bibitem{espig2012note}
M.~Espig, K.~K. Naraparaju, J.~Schneider, A note on tensor chain approximation,
  Computing and Visualization in Science 15~(6) (2012) 331--344.

\bibitem{kilmer2011factorization}
M.~E. Kilmer, C.~D. Martin, Factorization strategies for third-order tensors,
  Linear Algebra and its Applications 435~(3) (2011) 641--658.

\bibitem{newman2018stable}
E.~Newman, L.~Horesh, H.~Avron, M.~Kilmer, Stable tensor neural networks for
  rapid deep learning, arXiv preprint arXiv:1811.06569 (2018).

\bibitem{newman2019step}
E.~Newman, A step in the right dimension: Tensor algebra and applications,
  Ph.D. thesis, Tufts University (2019).

\bibitem{zhang2016exact}
Z.~Zhang, S.~Aeron, Exact tensor completion using t-svd, IEEE Transactions on
  Signal Processing 65~(6) (2016) 1511--1526.

\bibitem{zhang2014novel}
Z.~Zhang, G.~Ely, S.~Aeron, N.~Hao, M.~Kilmer, Novel methods for multilinear
  data completion and de-noising based on tensor-svd, in: Proceedings of the
  IEEE conference on computer vision and pattern recognition, 2014, pp.
  3842--3849.

\bibitem{soltani2016tensor}
S.~Soltani, M.~E. Kilmer, P.~C. Hansen, A tensor-based dictionary learning
  approach to tomographic image reconstruction, BIT Numerical Mathematics
  56~(4) (2016) 1425--1454.

\bibitem{kilmer2019tensor}
M.~Kilmer, L.~Horesh, H.~Avron, E.~Newman, Tensor-tensor products for optimal
  representation and compression, arXiv preprint arXiv:2001.00046 (2019).

\bibitem{mahoney2011randomized}
M.~W. Mahoney, et~al., Randomized algorithms for matrices and data, Foundations
  and Trends{\textregistered} in Machine Learning 3~(2) (2011) 123--224.

\bibitem{goreinov1997theory}
S.~A. Goreinov, E.~E. Tyrtyshnikov, N.~L. Zamarashkin, A theory of
  pseudoskeleton approximations, Linear algebra and its applications 261~(1-3)
  (1997) 1--21.

\bibitem{goreinov2010find}
S.~A. Goreinov, I.~V. Oseledets, D.~V. Savostyanov, E.~E. Tyrtyshnikov, N.~L.
  Zamarashkin, How to find a good submatrix, in: Matrix Methods: Theory,
  Algorithms And Applications: Dedicated to the Memory of Gene Golub, World
  Scientific, 2010, pp. 247--256.

\bibitem{oseledets2010tt}
I.~Oseledets, E.~Tyrtyshnikov, {TT}-cross approximation for multidimensional
  arrays, Linear Algebra and its Applications 432~(1) (2010) 70--88.

\bibitem{oseledets2008tucker}
I.~V. Oseledets, D.~Savostianov, E.~E. Tyrtyshnikov, {T}ucker dimensionality
  reduction of three-dimensional arrays in linear time, SIAM Journal on Matrix
  Analysis and Applications 30~(3) (2008) 939--956.

\bibitem{caiafa2010generalizing}
C.~F. Caiafa, A.~Cichocki, Generalizing the column--row matrix decomposition to
  multi-way arrays, Linear Algebra and its Applications 433~(3) (2010)
  557--573.

\bibitem{tarzanagh2018fast}
D.~A. Tarzanagh, G.~Michailidis, Fast randomized algorithms for t-product based
  tensor operations and decompositions with applications to imaging data, SIAM
  Journal on Imaging Sciences 11~(4) (2018) 2629--2664.

\bibitem{bebendorf2000approximation}
M.~Bebendorf, Approximation of boundary element matrices, Numerische Mathematik
  86~(4) (2000) 565--589.

\bibitem{bebendorf2006accelerating}
M.~Bebendorf, R.~Grzhibovskis, Accelerating galerkin bem for linear elasticity
  using adaptive cross approximation, Mathematical Methods in the Applied
  Sciences 29~(14) (2006) 1721--1747.

\bibitem{zhao2005adaptive}
K.~Zhao, M.~N. Vouvakis, J.-F. Lee, The adaptive cross approximation algorithm
  for accelerated method of moments computations of emc problems, IEEE
  transactions on electromagnetic compatibility 47~(4) (2005) 763--773.

\bibitem{rojo2004some}
O.~Rojo, H.~Rojo, Some results on symmetric circulant matrices and on symmetric
  centrosymmetric matrices, Linear algebra and its applications 392 (2004)
  211--233.

\bibitem{lu2019tensor}
C.~Lu, J.~Feng, Y.~Chen, W.~Liu, Z.~Lin, S.~Yan, Tensor robust principal
  component analysis with a new tensor nuclear norm, IEEE transactions on
  pattern analysis and machine intelligence 42~(4) (2019) 925--938.

\bibitem{kilmer2013third}
M.~E. Kilmer, K.~Braman, N.~Hao, R.~C. Hoover, Third-order tensors as operators
  on matrices: A theoretical and computational framework with applications in
  imaging, SIAM Journal on Matrix Analysis and Applications 34~(1) (2013)
  148--172.

\bibitem{martin2013order}
C.~D. Martin, R.~Shafer, B.~LaRue, An order-p tensor factorization with
  applications in imaging, SIAM Journal on Scientific Computing 35~(1) (2013)
  A474--A490.

\bibitem{ahmadi2022efficient}
S.~Ahmadi-Asl, An efficient randomized fixed-precision algorithm for tensor
  singular value decomposition, Communications on Applied Mathematics and
  Computation (2022) 1--20.

\bibitem{halko2011finding}
N.~Halko, P.-G. Martinsson, J.~A. Tropp, Finding structure with randomness:
  Probabilistic algorithms for constructing approximate matrix decompositions,
  SIAM review 53~(2) (2011) 217--288.

\bibitem{savostyanov2006polilinear}
D.~Savostyanov, Polilinear approximation of matrices and integral equations,
  Ph. D. dissertation, Dept. Math., INM RAS, Moscow, Russia (2006).

\bibitem{tyrtyshnikov2000incomplete}
E.~Tyrtyshnikov, Incomplete cross approximation in the mosaic-skeleton method,
  Computing 64~(4) (2000) 367--380.

\bibitem{chu1995rank}
M.~T. Chu, R.~E. Funderlic, G.~H. Golub, A rank--one reduction formula and its
  applications to matrix factorizations, SIAM review 37~(4) (1995) 512--530.

\bibitem{ahmadi2022cross}
S.~Ahmadi-Asl, M.~G. Asante-Mensah, A.~Cichocki, A.-H. Phan, I.~Oseledets,
  J.~Wang, Cross tensor approximation for image and video completion, arXiv
  preprint arXiv:2207.06072 (2022).

\bibitem{wang2022pedestrian}
X.~Wang, S.~Zheng, R.~Yang, A.~Zheng, Z.~Chen, J.~Tang, B.~Luo, Pedestrian
  attribute recognition: A survey, Pattern Recognition 121 (2022) 108220.

\bibitem{deng2014pedestrian}
Y.~Deng, P.~Luo, C.~C. Loy, X.~Tang, Pedestrian attribute recognition at far
  distance, in: Proceedings of the 22nd ACM international conference on
  Multimedia, 2014, pp. 789--792.

\bibitem{tang2019improving}
C.~Tang, L.~Sheng, Z.~Zhang, X.~Hu, Improving pedestrian attribute recognition
  with weakly-supervised multi-scale attribute-specific localization, in:
  Proceedings of the IEEE/CVF International Conference on Computer Vision,
  2019, pp. 4997--5006.

\bibitem{Ashish2023}
A.~Jha, D.~Ermilov, K.~Sobolev, A.~H. Phan, S.~Ahmadi-Asl, N.~Ahmed,
  I.~N.~Junejo, Z.~AL~Aghbari, T.~M. S.~B. Shamsa, A.~M. Khedr, A.~Cichoki,
  Pedestrian attribute recognition using lightweight attribute specific
  localization, Submitted (2023).

\bibitem{phan2018error}
A.-H. Phan, P.~Tichavsk{\`y}, A.~Cichocki, Error preserving correction: A
  method for cp decomposition at a target error bound, IEEE Transactions on
  Signal Processing 67~(5) (2018) 1175--1190.

\bibitem{liu2020parallel}
Y.~Liu, W.~Sid-Lakhdar, E.~Rebrova, P.~Ghysels, X.~S. Li, A parallel
  hierarchical blocked adaptive cross approximation algorithm, The
  International Journal of High Performance Computing Applications 34~(4)
  (2020) 394--408.

\end{thebibliography}


\end{document}